\titleformat{\section}[hang]{\center\Large\bf}{\thesection.}{0.5cm}{}
\DeclareSymbolFont{cyrletters}{OT2}{wncyr}{m}{n}
\DeclareMathSymbol{\Sha}{\mathalpha}{cyrletters}{"58}
\DeclareMathSymbol{\Brusse}{\mathalpha}{cyrletters}{"42}
\theoremstyle{plain}
\newtheorem{thm}{Theorem}[section]
\newtheorem{lem}[thm]{Lemma}
\newtheorem{pro}[thm]{Proposition}
\newtheorem{cor}[thm]{Corollary}
\theoremstyle{definition}
\newtheorem{defi}[thm]{Definition}
\newtheorem{rem}[thm]{Remark}
\newtheorem{exa}[thm]{Example}
\newcommand{\Z}{\mathbb{Z}}
\newcommand{\Q}{\mathbb{Q}}
\newcommand{\id}{\mathrm{id}}
\newcommand{\im}{\mathrm{Im}}
\newcommand{\aut}{\mathrm{Aut}}
\newcommand{\gm}{\mathbb{G}_{\mathrm{m}}}
\newcommand{\br}{\mathrm{Br}}
\newcommand{\brun}{\mathrm{Br}_1}
\newcommand{\al}{\mathrm{al}}
\newcommand{\bral}{\mathrm{Br}_{\al}}
\newcommand{\BM}{\mathrm{BM}}
\newcommand{\gal}{\mathrm{Gal}}
\renewcommand{\sl}{\mathrm{SL}}
\newcommand{\sln}{\mathrm{SL}_n}
\newcommand{\pgln}{\mathrm{PGL}_n}
\newcommand{\gln}{\mathrm{GL}_n}
\newcommand{\pic}{\mathrm{Pic}\,}
\newcommand{\tor}{\mathrm{tor}}
\renewcommand{\ss}{\mathrm{ss}}
\newcommand{\ssu}{\mathrm{ssu}}
\newcommand{\red}{\mathrm{red}}
\newcommand{\torf}{{\rm torf}}
\newcommand{\f}{\mathrm{f}}
\newcommand{\un}{\mathrm{u}}
\newcommand{\qt}{\mathrm{qt}}
\newcommand{\spec}{\mathrm{Spec}\,}
\renewcommand{\cal}[1]{\mathcal{#1}}
\newcommand{\bb}[1]{\mathbb{#1}}
\newcommand{\ov}[1]{\overline{#1}}
\numberwithin{equation}{section}
\title{Local-global principles for homogeneous spaces over some two-dimensional geometric global fields}
\author{Diego Izquierdo\\
\small
\'Ecole polytechnique \\
\small
\texttt{diego.izquierdo@polytechnique.edu}
\and
 Giancarlo Lucchini Arteche\\
 \small
 Universidad de Chile\\
  \small
\texttt{luco@uchile.cl} 
 }
\date{}
\begin{document}

\maketitle

\begin{abstract} In this article, we study the obstructions to the local-global principle for homogeneous spaces with connected or abelian stabilizers over finite extensions of the field $\mathbb{C}((x,y))$ of Laurent series in two variables over the complex numbers and over function fields of curves over $\mathbb{C}((t))$. We give examples that prove that the Brauer-Manin obstruction with respect to the whole Brauer group is not enough to explain the failure of the local-global principle, and we then construct a variant of this obstruction using torsors under quasi-trivial tori which turns out to work. In the end of the article, we compare this new obstruction to the descent obstruction with respect to torsors under tori. For that purpose, we use a result on towers of torsors, that is of independent interest and therefore is proved in a separate appendix.\\
{\bf MSC codes:} primary 14G12, 14G27, 14M17; secondary 11E72.\\
{\bf Keywords:} Two-dimensional fields, homogeneous spaces, Galois cohomology, local-global principle, Brauer-Manin obstruction.
\end{abstract}

\section{Introduction}

Consider a number field $K$. Recall that a class of $K$-varieties $\mathcal{F}$ is said to satisfy the local-global principle if any variety $Z \in \mathcal{F}$ that has points in every completion of $K$ has in fact a rational point over $K$. The most classical example of such a class is given by the class of quadrics over $K$ (Hasse-Minkowski Theorem).

Some varieties fail to satisfy the local-global principle. In order to explain those failures, given any $K$-variety $Z$, Manin introduced in 1970 a subset $Z(\mathbb{A}_K)^{\mathrm{Br}}$ of the set of adelic points $Z(\mathbb{A}_K)$ that always contains the set of rational points $Z(K)$. This allowed him to define a weakening of the local-global principle, the so-called Brauer-Manin obstruction: the Brauer-Manin obstruction is said to be the only obstruction to the local-global principle for a given class of $K$-varieties $\mathcal{F}$ if any variety $Z \in \mathcal{F}$ for which $Z(\mathbb{A}_K)^{\mathrm{Br}}\neq\emptyset$ has in fact a $K$-rational point.

Since Manin's contribution, various classes have been proved to satisfy or to fail the previous condition. One of the most important examples was given in 1981 by Sansuc (cf.~\cite{Sansuc81}), who proved that the Brauer-Manin obstruction is the only obstruction to the Hasse principle for principal homogeneous spaces under connected linear algebraic groups. This result was then extended in 1996 to all homogeneous spaces under connected linear algebraic groups with connected stabilizers by Borovoi (cf.~\cite{Borovoi96}).\\

Much more recently, there has been a considerable interest in the study of similar questions for various two-dimensional fields naturally arising in geometry. Consider a field $K$ of one of the following two types:
\begin{itemize}
    \item[(a)] the function field of a smooth projective curve $X$ over $\mathbb{C}((t))$;
    \item[(b)] the fraction field of a local, normal, henselian, two-dimensional, excellent domain $A$ with algebraically closed residue field of characteristic $0$ (for instance, $K$ can be any finite extension of the Laurent series field $\mathbb{C}((x,y))$ in two variables over the complex numbers). In this case, $X$ will stand for $\mathrm{Spec}(A)$.
\end{itemize}

The field $K$ then tends to have an arithmetic behaviour similar to that of the usual global fields. In case (a), one can define a local-global principle for the field $K$ by considering its completions with respect to the valuations coming from the set $X^{(1)}$ of closed points of the curve $X$.  Colliot-Th{\'e}l{\`e}ne/Harari (cf.~\cite[\S\S 8.2, 10.2]{ColliotHarari}) then defined a Brauer-Manin obstruction to this local-global principle and proved that it is the only obstruction for principal homogeneous spaces under connected linear algebraic groups.

In case (b), one can define a local-global principle for the field $K$ by considering its completions with respect to the valuations coming from the set $X^{(1)}$ of codimension 1 points of $X$. The first-named author (cf.~\cite[\S 4.1]{IzqDualDim2}) then introduced a Brauer-Manin obstruction to this local-global principle and proved that it is the only obstruction for principal homogeneous spaces under connected linear algebraic groups.

Moreover, in \cite[\S2.3]{ColliotParimalaSuresh}, Colliot-Th{\'e}l{\`e}ne/Parimala/Suresh defined other reciprocity obstructions for fields of types (a) and (b), which turn out to be stronger in case (b) by \cite[Cor.~4.4]{IzqDualDim2}. In other words, every counter-example to the local-global principle explained by the Brauer-Manin obstruction introduced in \cite{IzqDualDim2} can also be explained by the obstructions of \cite{ColliotParimalaSuresh}.\\

The previous results show that principal homogeneous spaces over fields of types (a) and (b) satisfy properties similar to those proved by Sansuc for principal homogeneous spaces over number fields. The present article aims at investigating if these properties extend to the case of homogeneous spaces with connected stabilizers, as it was proved by Borovoi in the case of number fields. In particular, since we intend to use the Brauer-Manin obstructions defined in \cite{ColliotHarari} and \cite{IzqDualDim2}, we will always consider adelic points with respect to the set of places $X^{(1)}$.

As we will explain at the beginning of Section 4, one can easily find examples of homogeneous spaces $Z$ under $\sln$ with toric stabilizers that have no rational points but for which the Brauer-Manin set $Z(\mathbb{A}_K)^{\mathrm{Br}}$ is non-empty (see Section \ref{sec preliminaires} for precise definitions). This follows from the failure of the local-global principle with respect to $X^{(1)}$ for central simple algebras over fields of types (a) and (b) (cf.~\cite[\S2.3]{ColliotHarari} and \cite[\S2]{IzqDualDim2}). So, in order to understand the obstructions to the local-global principle for such homogeneous spaces, one needs to impose extra local conditions on $X$.

This is easy to do. Indeed, the Brauer-Manin obstruction we are using here only takes into account some of the completions of the field $K$: in case (a), we are only considering completions with respect to the valuations coming from the closed points of the curve $X$, while in case (b), we are only considering completions with respect to the valuations coming from prime ideals of height one in the domain $A$. However, the field $K$ has more valuations, and hence it is natural to ask whether a homogeneous space $Z/K$ with connected stabilizers for which the Brauer-Manin set $Z(\mathbb{A}_K)^{\mathrm{Br}}$ is non-empty \emph{and} that has points in \emph{all} completions of $K$ always has a rational point. The first main theorem of the present article gives a negative answer to this question:

\begin{thm}[Consequence of Theorem \ref{thm counter} and Remark \ref{rem exist counter}] In each of the cases (a) and (b), there exists a field $K$ and a homogeneous space $Z/K$ under $\sl_{n,K}$ for some $n \geq 1$ with toric stabilizers for which the Brauer-Manin set $Z(\mathbb{A}_K)^{\mathrm{Br}}$ is non-empty, that has points in all completions of $K$, but that has no $K$-rational points.
\end{thm}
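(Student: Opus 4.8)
The plan is to exhibit the counterexample by hand, as a homogeneous space attached to a carefully chosen torus and a carefully chosen second cohomology class, and then to check the three required properties one at a time. The starting point is the classical description of homogeneous spaces under $\sln$ with abelian stabilizer. Any $L$-torus $T$ embeds as a closed subgroup of some $\sln$ over $L$ (embed $T$ into a quasi-trivial torus, then into a suitable $\mathrm{GL}_m$, then into $\mathrm{SL}_{m+1}$). Since $L$ has cohomological dimension at most $2$, Douai's theorem on the neutrality of gerbes banded by semisimple simply connected groups applies, so for a fixed such embedding every class $c\in H^2(L,T)$ is the Springer class of a homogeneous space $Z=Z_c$ of $\sln$ over $L$ with band $T$ (and hence with toric stabilizers). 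Finally, since $\sln$ is simply connected one has $H^1(F,\sln)=\{\ast\}$ for every field $F$, and an elementary argument with stabilizers then shows that for every field extension $F/L$ the space $Z_c$ has an $F$-point if and only if $c$ becomes trivial in $H^2(F,T)$; in particular $Z_c(L)\neq\emptyset\iff c=0$ and $Z_c(L_v)\neq\emptyset\iff c_v=0$ for every valuation $v$ of $L$.

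It therefore suffices to produce a finite extension $L/K$, an $L$-torus $T$ and a class $c\in H^2(L,T)$ such that: (i) $H^1(L,\widehat T)=0$; (ii) $c\neq 0$; and (iii) $c$ lies in $\Sha^2(L,T)$, the group $\Sha$ being computed with respect to \emph{all} valuations of $L$. Indeed, given such data put $Z:=Z_c$. By (iii) one has $c_v=0$ for every valuation, so $Z$ has a point in every completion of $L$ and in particular $Z(\mathbb{A}_L)\neq\emptyset$; by (ii), $Z(L)=\emptyset$. For the Brauer--Manin set, recall that the geometric Picard group of $\sln/T$ is $\widehat T$ (because $\mathrm{Pic}(\overline{\sln})=0$ and $\overline{\sln}$ has no nontrivial characters) while its geometric Brauer group vanishes (as for any homogeneous space of a semisimple simply connected group). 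Sansuc's exact sequence (\cite{Sansuc81}) then gives an injection $\br(Z)/\br(L)\hookrightarrow H^1(L,\widehat T)$, which is zero by (i); hence every Brauer class on $Z$ comes from $\br(L)$, and such classes do not obstruct, the relevant reciprocity law for $\br(L)$ over these fields being built into the definition of the Brauer--Manin pairing used in \cite{ColliotHarari,IzqSMF,IzqDualDim2}. Therefore $Z(\mathbb{A}_L)^{\br}=Z(\mathbb{A}_L)\neq\emptyset$, and $Z$ has all the required properties.

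The heart of the matter — and the only genuinely difficult step — is the construction of such a triple $(L,T,c)$, and this is exactly where the arithmetic of fields of type (a) and (b) parts ways with that of number fields. Over a number field $k$, a torus with $H^1(k,\widehat T)=0$ satisfies $\Sha^1(k,\widehat T)=0$, hence $\Sha^2(k,T)=0$ by Poitou--Tate duality, so conditions (i)--(iii) would be contradictory; this is precisely why Borovoi's theorem holds over number fields. Over fields of type (a) or (b), by contrast, the arithmetic (Poitou--Tate) duality established in \cite{IzqSMF,ColliotHarari,IzqDualDim2} pairs $\Sha^2(L,T)$ not with $\Sha^1(L,\widehat T)$ but with a \emph{Tate-twisted} variant of it, so that $\Sha^2(L,T)$ can be nonzero even when $\widehat T$ has vanishing $H^1$ — one may, for instance, take $\widehat T$ to be a suitable coflasque lattice for a non-cyclic splitting group, which keeps (i) true while leaving room for (ii)--(iii). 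Concretely, one first replaces $K$ by a finite extension $L$ (so that a splitting field $E$ with the requisite local behaviour becomes available) and then builds $T$ with an explicit extension $E/L$ — in the spirit of the classical Hasse-norm-principle counterexamples, e.g.~using a $(\Z/\ell)^2$-extension that is everywhere locally cyclic — for which $\Sha^2(L,T)$ acquires a nonzero element. The main obstacle is to carry this out and to verify that the resulting $c$ is genuinely nonzero yet vanishes at \emph{every} completion of $L$; this rests on the explicit descriptions of $H^2$ and of $\br$ of these fields and of their completions from the works cited, and ultimately on the failure of Chebotarev's density theorem, which is exactly what allows a nonzero global cohomology class to become trivial at each of the many valuations of $L$. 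By construction this configuration is invisible to the usual Brauer--Manin obstruction — it lies in the kernel of the natural map $H^2(L,T)\to\Sha^1(L,\widehat T)^{D}$ — and detecting it is precisely what the refined obstruction via torsors under quasi-trivial tori, studied in the rest of the paper, is designed to do.
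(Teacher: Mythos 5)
Your high-level plan coincides with the paper's up to a certain point: both take a finite extension $L/K$, a torus $S$, a non-zero class in $\Sha^2_{\mathrm{tot}}(L,S)$, and use the Demarche--Lucchini Arteche machinery to produce from it a homogeneous space under $\sln$ whose geometric stabilizer is $\bar S$ and whose Springer class is that class; you correctly observe that the non-vanishing of the class kills rational points while its total local triviality gives points everywhere locally. Where you diverge is in the treatment of the Brauer--Manin set. You propose to choose $S$ so that $H^1(L,\hat S)=0$, which via Sansuc's sequence and $\br(\bar Z)=0$ forces $\br(Z)=\br_0(Z)$ and makes condition (iii) automatic. The paper does the opposite: it takes $\hat S$ \emph{flasque} (the flasque kernel of a resolution of a biquadratic norm-one torus $Q$), explicitly computes $H^1(L,\hat S)\cong\Z/2\Z$, so that $\br(Z)/\br_0(Z)=\Z/2\Z$ is genuinely non-trivial, and then spends five further steps (Steps 5--9) building auxiliary torsors $W\to Z$ and $\tilde V\to V$, identifying the generator with a quaternion class via Colliot-Th\'el\`ene's computation of unramified Brauer groups of multinorm torsors, and checking orthogonality at a well-chosen place $v_0$.

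The gap in your argument is precisely the step you yourself flag as ``the main obstacle.'' You need a finite extension $L/K$ and a torus $S$ with $H^1(L,\hat S)=0$ (e.g.~$\hat S$ coflasque) \emph{and} $\Sha^2_{\mathrm{tot}}(L,S)\neq 0$, and you do not produce one. The appeal to a ``Tate-twisted Poitou--Tate duality'' does not settle this: the duality theorems of Colliot-Th\'el\`ene--Harari and Izquierdo pair Tate--Shafarevich groups taken with respect to the codimension-one points of $X$ (resp.~$Y$), not $\Sha_{\mathrm{tot}}$, so one cannot read off the non-vanishing of $\Sha^2_{\mathrm{tot}}(L,S)$ from duality alone; and the input the paper actually uses from Colliot-Th\'el\`ene--Parimala--Suresh is the non-vanishing of $\Sha^1_{\mathrm{tot}}(L,Q)$ for a carefully arranged biquadratic norm-one torus $Q$, which feeds into $\Sha^2_{\mathrm{tot}}(L,S)$ through a \emph{flasque} resolution $1\to S\to R\to Q\to 1$ and the vanishing of $\Sha^2_{\mathrm{tot}}$ of the quasi-trivial $R$. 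That construction forces $\hat S$ flasque rather than coflasque, and the resulting $H^1(L,\hat S)$ is $\Z/2\Z$, not zero. If a coflasque torus with non-trivial $\Sha^2_{\mathrm{tot}}$ were available off the shelf, the paper's Steps 5--9 would be unnecessary; their presence is strong evidence that the clean shortcut you propose is not known to exist, and at any rate you have not established it. Until the triple $(L,S,c)$ with $H^1(L,\hat S)=0$ is actually exhibited, the argument is incomplete.
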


For that reason, one should go beyond the Brauer-Manin obstruction in order to understand the failure of the local-global principle for homogeneous spaces over fields of types (a) and (b). A natural way to do so consists in combining the Brauer-Manin obstruction with another very usual one, the descent obstruction. If it is known that, for smooth and geometrically integral varieties over number fields, the descent obstruction with respect to torsors under connected linear groups does not carry more information than the Brauer-Manin obstruction with respect to the whole Brauer group (cf.~\cite{Har02}), the situation turns out to be completely different over fields of types (a) and (b):

\begin{thm}[Consequence of Theorem \ref{thm new obst} and Proposition \ref{prop conn stab}] \label{mt2} Let $K$ be a field of type (a) or (b) as above. Let $Z$ be a homogeneous space under a connected linear group $G$ with connected geometric stabilizers. Assume that $Z$ has points in every completion of $K$ with respect to a discrete valuation. Then there exists a torsor $W\rightarrow Z$ under a quasi-trivial torus $T$ such that the following statements are equivalent:
\begin{itemize}
    \item[(i)] the set $Z(K)$ is non-empty;
    \item[(ii)] the set $W(\bb A_K)^{\br(W)}$ is non-empty.
\end{itemize}
\end{thm}

The torsor $W$ is obtained by using some results in \cite{DLA}, but it can be explicitly given using Galois descent. Moreover, while proving Theorem \ref{mt2}, we will see that the assumption that $Z$ has points in every completion of $K$ with respect to a discrete valuation can be replaced by the assumption that $Z$ has points in an explicit finite number of completions of $K$. We will also see that one needs to take into account not the whole Brauer group of $W$ in the condition $W(\bb A_K)^{\br(W)}\neq\emptyset$, but only a subquotient that turns out to be finite. Hence, even if we do not know whether all the involved constructions are algorithmically computable, Theorem \ref{mt2} provides finitely many explicit conditions to find out whether the homogeneous space $Z$ has rational points.

At the end of the article, we compare the obstruction of Theorem \ref{mt2}, which combines the Brauer-Manin obstruction and the descent obstruction with respect to torsors under quasi-trivial tori, to the descent obstruction with respect to torsors under general tori (see theorem \ref{thm tor vs qtBr}). We deduce:

\begin{thm}[Consequence of Corollary \ref{descent hom} and Proposition \ref{prop conn stab}]
Let $K$ be a field of type (a) or (b) as above. Let $Z$ be a homogeneous space under a connected linear group $G$ with connected geometric stabilizers. Assume that $Z$ has points in every completion of $K$ with respect to a discrete valuation and that the subset of $Z(\bb A_K)$ given by the descent obstruction with respect to torsors under general tori (as defined in section \ref{descent sec}) is non-empty. Then $Z$ has a $K$-rational point.
\end{thm}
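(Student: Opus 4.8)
The plan is to obtain the corollary by combining Theorem~\ref{mt2} with the comparison of obstructions recorded in Theorem~\ref{thm tor vs qtBr}, so that almost no new argument is needed at this point. Let $Z(\mathbb{A}_K)^{\mathrm{desc},\,\mathrm{tori}}$ denote the subset of $Z(\mathbb{A}_K)$ cut out by descent along torsors under arbitrary tori (as in section~\ref{descent sec}), and let $f\colon W\to Z$ be the specific torsor under a quasi-trivial torus $T$ furnished by Theorem~\ref{mt2} (which applies since $Z$ has points in all completions of $K$). Theorem~\ref{thm tor vs qtBr} should in particular give that $Z(\mathbb{A}_K)^{\mathrm{desc},\,\mathrm{tori}}$ is contained in the set cut out by the obstruction of Theorem~\ref{mt2} --- Brauer--Manin combined with descent along torsors under quasi-trivial tori --- and hence, since $H^1(K,T)=0$ forbids any nontrivial twisting of $W$, in the image under $f$ of the Brauer--Manin set $W(\mathbb{A}_K)^{\mathrm{Br}}$. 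Granting this, the hypothesis $Z(\mathbb{A}_K)^{\mathrm{desc},\,\mathrm{tori}}\neq\emptyset$ forces $W(\mathbb{A}_K)^{\mathrm{Br}}\neq\emptyset$, whereupon Theorem~\ref{mt2} yields $Z(K)\neq\emptyset$. So the corollary reduces to this consequence of Theorem~\ref{thm tor vs qtBr}.

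As Theorem~\ref{thm tor vs qtBr} has not yet appeared in the excerpt, let me sketch how I would prove the inclusion it provides. Fix $(z_v)\in Z(\mathbb{A}_K)^{\mathrm{desc},\,\mathrm{tori}}$ and an arbitrary torsor $f\colon W\to Z$ under a quasi-trivial torus $T$; since $H^1(K,T)=0$ it suffices to produce an adelic point of $W$ above $(z_v)$ orthogonal to $\mathrm{Br}(W)$. Using the explicit form of $W$ coming out of the proof of Theorem~\ref{mt2}, one controls $\mathrm{Br}(W)$: the classes relevant to the obstruction are algebraic and, by the Colliot-Th\'el\`ene--Sansuc description of the algebraic Brauer group, split by torsors under tori over $W$; so the Brauer--Manin obstruction on $W$ is governed by descent along torus torsors $V\to W$. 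Any such $V\to W$, lying over the torsor $W\to Z$ under the quasi-trivial torus $T$, forms a tower of torsors under tori, and the result on towers of torsors proved in the appendix dominates this tower (in the sense of that appendix) by a single torsor $W'\to Z$ under a torus. Feeding $W'\to Z$ into the hypothesis on $(z_v)$ produces a twist of $W'$ with an adelic point above $(z_v)$; pushing it down the tower gives an adelic point of $W$ above $(z_v)$ orthogonal to the classes split by $V\to W$. Letting $V\to W$ run over enough torus torsors to split $\mathrm{Br}(W)$, and then $f\colon W\to Z$ over all torsors under quasi-trivial tori, we conclude that $(z_v)$ lies in the set cut out by the Theorem~\ref{mt2} obstruction, as wanted. (If Theorem~\ref{thm tor vs qtBr} is actually an equality of the two obstruction sets, the reverse inclusion additionally requires resolving a general torus by a quasi-trivial one through a flasque-type sequence $1\to S\to P\to T\to 1$ and absorbing the quotient into Brauer--Manin; but that is not needed for the corollary.)

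The step I expect to be delicate is the assertion that $\mathrm{Br}(W)$ is exhausted by classes split by torus torsors over $W$, i.e.\ that $W$ carries no essential transcendental Brauer classes and that $\mathrm{Pic}(\overline{W})$ is sufficiently rigid; over the two-dimensional geometric fields in play this is not automatic and has to be extracted from the particular construction of $W$ in Theorem~\ref{mt2}, which is presumably where the hypotheses that $G$ is connected with connected geometric stabilizers really enter. A more bookkeeping-type difficulty is keeping track of base points and twists along the tower of torsors, so that the appendix result is applied compatibly with the chosen local points $(z_v)$, and checking that the intersection defining the Theorem~\ref{mt2} set is already controlled by finitely many torsors (in line with the effectivity remark following Theorem~\ref{mt2}), so that the comparison is substantive rather than a formal chain of inclusions.
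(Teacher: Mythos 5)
Your first paragraph is essentially the paper's proof of the corollary: Theorem~\ref{thm tor vs qtBr} gives $Z(\mathbb{A}_K)^{\tor}\subseteq Z(\mathbb{A}_K)^{\qt,\brun}$, which is contained in $Z(\mathbb{A}_K)^{\qt,\Brusse}$ since $\Brusse(W)\subseteq \br_1(W)/\br_0(W)$; the hypothesis that $Z$ has points in \emph{all} completions of $K$ in particular covers the finite set $\Omega_{0,Z}$, so Corollary~\ref{cor new obst} applies. One imprecision worth flagging: you twice say an adelic point of $W$ must be orthogonal to $\br(W)$, but Theorem~\ref{thm tor vs qtBr} only controls $\br_1$ --- fortunately only $\Brusse$ is needed for Theorem~\ref{thm new obst} to fire, so the deduction is unaffected.

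Your second paragraph, re-proving Theorem~\ref{thm tor vs qtBr}, is not needed (that theorem is proved in the paper, and the corollary merely packages it together with Corollary~\ref{cor new obst}), and your sketch departs from the paper's proof. The paper does not attempt to split $\br_1(W)$ along torus torsors over $W$; instead it exploits the period-equals-index property of the fields in play to realize any $\alpha\in\br_1(W)$ by a $\pgln$-torsor $V\to W$, lifts $V$ to a torsor under a central extension $G$ of $\pgln$ by a quasi-trivial torus $S$ which kills $\alpha$, passes to the quotient $Y=U/\sln$ to obtain a \emph{torus} torsor $Y\to W$, and only then applies the appendix theorem to $Y\to W\to Z$; the conclusion then requires twisting the whole diagram using Lemma~\ref{lem GA} to lift cocycles across $\sln$ and the vanishing of $H^1(K_v,\mathrm{SL}(A))$ to lift local points. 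Your alternative route --- exhausting $\br_1(W)$ by classes split along torus torsors --- has exactly the gap you flag: it amounts to invoking a Colliot-Th\'el\`ene--Sansuc universal torsor for $W$, whose existence and splitting property you do not verify, and it only makes sense for the specific homogeneous $W$ arising in Theorem~\ref{thm new obst}, whereas Theorem~\ref{thm tor vs qtBr} is proved for arbitrary smooth geometrically integral $Z$ and arbitrary quasi-trivial torsors $W\to Z$.
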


The proof of this result relies on a theorem of independent interest about towers of torsors stating the following. If $K$ is a field of characteristic $0$, $G$ is a connected linear $K$-group with trivial geometric Picard group, $T$ is a $K$-torus and $X$ is a geometrically integral $K$-variety, then every $T$-torsor over a $G$-torsor over $X$ admits a structure of a torsor over $X$ under a certain extension of $G$ by $T$. This result will be proved in Appendix \ref{sec appendix}.

\section*{Acknowledgements}
The authors would like to thank Mathieu Florence for his enormous help with the result in the appendix. They would also like to thank Jean-Louis Colliot-Thélène and an anonymous referee for all their suggestions that allowed them to considerably improve this text. The second author would like to thank Michel Brion for helpful discussions. The second author's research was partially supported by ANID via FONDECYT Grant 1210010 and PAI Grant 79170034.

\section{Notations and preliminaries}\label{sec preliminaires}

In this section we fix the notations that will be used throughout this article.

\paragraph*{Fields}
Let $F$ be an algebraically closed field of characteristic $0$, and let $A$ be a local, integral, normal, henselian, excellent domain with residue field $F$ and fraction field $E$. Set $S:=\mathrm{Spec}(A)$ and let $s$ be the closed point of $S$. Consider an integral, regular, $2$-dimensional scheme $\mathcal{X}$ endowed with a projective surjective morphism $p: \mathcal{X} \rightarrow S$. Let $K$ be the function field of $\mathcal{X}$, let $X_0$ be the special fiber of $p$ and set $X:=\mathcal{X} \setminus X_0 = p^{-1}(S \setminus \{s\})$. \\

\noindent\emph{Assumptions.} In the sequel, we will assume that $X_0$ is a strict normal crossings divisor in $\mathcal{X}$ and that we are in one of the two following cases:
\begin{itemize}
\item[(a)] \emph{(Semi-global case).} The ring $A$ is a discrete valuation ring, all fibers of $p$ are 1-dimensional, and the generic fiber is smooth and geometrically integral. 
\item[(b)] \emph{(Local case).} The ring $A$ is $2$-dimensional and $p$ is birational. In particular, $X=S\setminus\{s\}$.
\end{itemize}

Throughout the article, we will say that the field $K$ is a field of type (a) or (b) respectively. Observe that these types of fields are stable under finite extensions. Indeed, if $L/K$ is a finite extension, then $L$ is a semi-global field in the sense of (a) (resp. a local field in the sense of (b)) if, and only if, $K$ is a semi-global field in the sense of (a) (resp. a local field in the sense of (b)).

\begin{exa}
\begin{itemize}
\item[(i)] In the semi-global case, one can take $A=\mathbb{C}[[t]]$, let $X$ be a smooth projective geometrically integral curve over $E=\mathbb{C}((t))$ and $\mathcal{X}$ be a regular model of $X$ whose special fiber has strict normal crossings. The field $K$ is then a finite extension of $E(x)$.
\item[(ii)] In the local case, one can let $K$ be a finite extension of $\mathbb{C}((x,y))$, let $S$ be the normalization of $\mathrm{Spec}(\mathbb{C}[[x,y]])$ in $K$ and let $\mathcal{X}$ be a desingularization of $X$ whose special fiber has strict normal crossings.
\end{itemize}
\end{exa}

\paragraph*{Local-global principle and Brauer-Manin obstruction}
A place of $K$ is a discrete valuation of rank 1 of $K$. Given a subset $\Omega$ of the set $\Omega_K$ of \emph{all} places of $K$, we say that a $K$-variety $Z$ satisfies the local-global principle with respect to $\Omega$ if
\[\prod_{v\in \Omega} Z(K_v) \neq \emptyset \,\Rightarrow\, Z(K) \neq \emptyset,\]
where $K_v$ denotes the completion of $K$ at the place $v$.

There are mainly three different natural choices for the set $\Omega$ in this context:
\begin{itemize}
    \item[(a)] the set $\Omega_K$ of all places of $K$;
    \item[(b)] the set $\mathcal{X}^{(1)}$ of codimension 1 points of $\mathcal{X}$;
    \item[(c)] the set $X^{(1)}$ of codimension 1 points of $X$.
\end{itemize}
Whatever the choice, one can find varieties that fail the local-global principle. That is why one usually tries to introduce obstructions that explain such failures. Fix a $K$-variety $Z$. Given a regular model $\mathcal{Z}$ of $Z$ over an open subset $U$ of $X$, define the set of its adelic points as
\[Z(\mathbb{A}_K)= \left\lbrace(P_v) \in \prod_{v\in X^{(1)}} Z(K_v) \, : \, P_v \in \mathcal{Z}(\mathcal{O}_v) \,\text{for almost all $v$}\right\rbrace.\]
Recall that this set does not depend on the choice of the model $\mathcal{Z}$ of $Z$. By \cite[Prop.~2.1.(v)]{ColliotHarari} (semi-global case) and \cite[Thm.~1.6]{IzqDualDim2} (local case), there is an exact sequence
\[\br(K) \rightarrow \bigoplus_{v\in X^{(1)}} \br(K_v) \xrightarrow{\theta} \mathbb{Q}/\mathbb{Z}\to 0.\]
Now, for $v\in X^{(1)}$, the residue field of the ring of integers $\cal O_v$ of $K_v$ has cohomological dimension 1, thus $\br(\cal O_v)=0$ for every $v\in\Omega$. Hence one can define a Brauer-Manin pairing
\begin{align*}
\mathrm{BM}: Z(\mathbb{A}_K) \times \br(Z) & \rightarrow \mathbb{Q}/\mathbb{Z}\\
\left( (P_v),\alpha \right) & \mapsto \theta(P_v^*\alpha)
\end{align*}
with the following property: $Z(K)$ is contained in the subset $Z(\mathbb{A}_K)^{\br(Z)}$ of $Z(\mathbb{A}_K)$ defined as the set of adelic points that are orthogonal to the cohomological Brauer group $\br(Z):=H^2(Z,\mathbb{G}_m)$. Define
\begin{align*}
    \br_0(Z) &:=\im \left( \br(K) \rightarrow \br(Z) \right),\\
    \br_1(Z) &:=\ker \left( \br(Z) \rightarrow \br(Z_{\bar K}) \right),\\
    \bral(Z) &:=\br_1(Z)/\br_0(Z),
\end{align*}
and note that the pairing $\mathrm{BM}$ factors through $\br(Z)/\br_0(Z)$. Given a subgroup $B$ of $\br(Z)$ or of $\br(Z)/\br_0(Z)$, we say that the Brauer-Manin obstruction with respect to $B$ is the only obstruction for $Z$ if
\[Z(\mathbb{A}_K)^{B} \neq \emptyset\, \Rightarrow\, Z(K) \neq \emptyset.\]
In the sequel, the group $B$ will often be chosen to be:
$$\Brusse (Z) := \ker \left( \bral(Z) \rightarrow \prod_{v\in X^{(1)}} \bral(Z_{K_v}) \right).$$
This group has already been used in the articles \cite{ColliotHarari}, \cite{IzqSMF} and \cite{IzqDualDim2}, where it was proved that the Brauer-Manin obstruction with respect to $\Brusse(Z)$ is the only obstruction to the local-global principle for principal homogeneous spaces under connected linear groups over $K$.

\paragraph*{Tate-Shafarevich groups}
As explained in the previous paragraph, the local-global principle over the field $K$ can be defined with respect to different sets of places. For that reason, in the sequel, when we are given a Galois module $M$ over $K$ and an integer $i \geq 0$, we will make use of the following three Tate-Shafarevich groups:
\begin{gather*}
    \Sha^i(K,M):= \ker \left( H^i(K,M) \rightarrow \prod_{v\in X^{(1)}} H^i(K_v,M) \right),\\
    \Sha^i_{\mathcal{X}}(K,M):= \ker \left( H^i(K,M) \rightarrow \prod_{v\in \mathcal{X}^{(1)}} H^i(K_v,M) \right),\\
    \Sha^i_{\mathrm{tot}}(K,M):= \ker \left( H^i(K,M) \rightarrow \prod_{v\in \Omega_K} H^i(K_v,M) \right).
    \end{gather*}

\paragraph*{Algebraic groups and homogeneous spaces}
For a linear algebraic $K$-group $G$, we use the following notations:
\begin{itemize}
\item $D(G)$ is the derived subgroup of $G$;
\item $G^\circ$ is the neutral connected compontent of $G$;
\item $G^\f=G/G^\circ$ is the group of connected components of $G$ (it is a finite group);
\item $G^\un$ is the unipotent radical of $G^\circ$ (it is a unipotent characteristic subgroup);
\item $G^\red=G^\circ/G^\un$ (it is a reductive group);
\item $G^\ss=D(G^\red)$ (it is a semisimple group);
\item $G^\tor=G^\red/G^\ss$ (it is a torus);
\item $G^\ssu=\ker[G^\circ\twoheadrightarrow G^\tor]$ (it is an extension of $G^\ss$ by $G^\un$);
\item $G^\torf=G/G^\ssu$ (it is an extension of $G^\f$ by $G^\tor$);
\item $\hat G$ the Galois module of the geometric characters of $G$.
\end{itemize}
A torus $T$ is said to be quasi-trivial if $\hat T$ is an induced $\gal(\bar K/K)$-module.\\

In the sequel, we will be often considering homogeneous spaces of a connected linear group $G$ with geometric stabilizer $\bar H$ satisfying the following hypothesis:
\begin{equation}\label{eq hyp}
G^\ss\text{ is simply connected and }\bar H^{\torf}\text{ is abelian.}
\end{equation}
This is the same hypothesis used by Borovoi in \cite{Borovoi96}. In particular, since for connected $\bar H$ we have that $\bar H^\torf=\bar H^\tor$ is abelian, the following result follows immediately from \cite[Lem.~5.2]{Borovoi96}.

\begin{pro}\label{prop conn stab}
Let $Z$ be a homogeneous space of a connected linear group with connected geometric stabilizers. Then $Z$ is a homogeneous space of a connected linear $K$-group $G$ with geometric stabilizer $\bar H$ satisfying \eqref{eq hyp}.
\end{pro}

Let $Z$ be a homogeneous space under a linear connected $K$-group $G$ with geometric stabilizer $\bar H$. Following \cite[\S2]{DLA}, we associate to $Z$ a gerbe $\cal M_Z$ and an injective morphism of gerbes $\cal M_Z\to\underline{\mathrm{TORS}}(G)$, where $\underline{\mathrm{TORS}}(G)$ denotes the trivial gerbe of $G$-torsors under $G$. The gerbe $\cal M_Z$ is known as the \emph{Springer class} of $Z$ and can be regarded as a class in a certain non-abelian 2-cohomology set associated to the geometric stabilizer $\bar H$. It corresponds to the obstruction for $Z$ to being dominated by a principal homogeneous space of $G$ (i.e.~the class is neutral if and only if such a dominating torsor exists). See \cite{DLA} for more details.

Assume now that $G$ and $\bar H$ satisfy \eqref{eq hyp}. Since the subgroup $\bar H^\ssu$ is characteristic in $\bar H$, we may also consider the induced gerbe $\cal M^\torf_Z$ which, since $\bar H^\torf$ is abelian, defines a natural $K$-form $H^\torf$ of $\bar H^\torf$ (cf.~for instance \cite[\S1.7]{Borovoi93}) and $\cal M^\torf_Z$ can then be regarded as a class $\eta^\torf\in H^2(K,H^\torf)$.\\

We will use in the sequel the following results on these particular base fields and their corresponding nonabelian cohomology sets for semisimple groups.

\begin{pro}\label{prop coh dim 2 and period-index}
Let $K$ be a field of type (a) or (b). Then $K$ has cohomological dimension 2 and for each central simple algebra $A$ over $K$, its index and its period are the same. Moreover, every principal homogeneous space under a semisimple simply connected $K$-group has a rational point.
\end{pro}

\begin{proof}
For fields of type (a), the first two assertions are a direct consequence of \cite[Thm.~5.5]{HHK} and the last one then follows from \cite[Thm.~1.2]{ColliotGilleParimala}. For fields of type (b), all assertions are contained in \cite[Thm.~1.4]{ColliotGilleParimala}.
\end{proof}

\begin{lem}\label{lem GA}
Let $K$ be a field of type (a) or (b). Let $\cal L$ be a $K$-lien (or $K$-kernel) whose underlying $\bar K$-group $\bar G$ satisfies $\bar G=\bar G^\ssu$. Then every class in $H^2(K,\cal L)$ is neutral.
\end{lem}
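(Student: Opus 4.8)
The plan is to reduce the statement to a known cohomological vanishing result over the base fields of type (a) and (b). Since $\bar G = \bar G^{\ssu}$, the group $\bar G$ is an extension of a semisimple group $\bar G^{\ss}$ by the unipotent radical $\bar G^{\un}$; moreover, under assumption \eqref{eq hyp} the semisimple part $\bar G^{\ss}$ is simply connected. The first step is to peel off the unipotent radical: the lien $\cal L$ induces a lien $\cal L^{\ss}$ on $\bar G^{\ss} = \bar G/\bar G^{\un}$, and because $\bar G^{\un}$ is unipotent in characteristic $0$ (so $H^i(K',\bar G^{\un}) = 0$ for $i \geq 1$ and all extensions $K'/K$, the group being a successive extension of $\mathbb{G}_a$'s), the standard exact sequence of nonabelian $H^2$ (see Springer, or \cite{Borovoi93,DLA}) shows that the map $H^2(K,\cal L) \to H^2(K,\cal L^{\ss})$ has trivial kernel up to neutral classes and sends neutral classes to neutral classes; in fact it identifies the neutrality of a class with that of its image. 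So I am reduced to the case $\bar G = \bar G^{\ss}$ simply connected semisimple.

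For the semisimple simply connected case, the key point is that the band $\cal L^{\ss}$ is necessarily \emph{inner}: the outer automorphism group of a semisimple group is finite, and one first uses the fact that any lien is represented by a cocycle valued in $\aut(\bar G^{\ss})$; twisting by the (uniquely determined) $K$-form of the adjoint quotient, we may assume $\cal L^{\ss}$ comes from an actual $K$-group $G'$ that is an inner form, i.e.\ $\cal L^{\ss} = \mathrm{lien}(G')$ with $G'$ semisimple simply connected over $K$. Then by the classical dictionary (Springer), the set $H^2(K,\cal L^{\ss})$ is in bijection with (a quotient related to) $H^2(K, G'/Z(G'))$-classes lifting appropriately, and a class is neutral precisely when it comes from $H^2(K, Z(G'))$ via the connecting map associated to $1 \to Z(G') \to G' \to G'/Z(G') \to 1$. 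Concretely, neutrality of every class in $H^2(K,\cal L^{\ss})$ follows once one knows that $H^2(K, G'^{\ad}) $ (nonabelian) has only neutral classes, which in turn reduces — via the center, which is a finite group of multiplicative type — to a statement about the abelian groups $H^2(K, Z)$ and $H^3(K, Z)$ for $Z$ finite of multiplicative type, together with surjectivity/injectivity properties of the relevant maps.

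The arithmetic input, and the step I expect to be the main obstacle to write cleanly, is precisely this: over fields $K$ of type (a) and (b) one has strong control on the cohomology of tori and of finite modules of multiplicative type (this is the content of the duality theorems of \cite{ColliotHarari,IzqSMF,IzqDualDim2}), and in particular $H^2(K,\cal L)$ for an inner semisimple simply connected lien should consist only of neutral classes because the obstruction lives in a group that vanishes for these fields — essentially the same mechanism by which $H^1(K,G)$ for $G$ semisimple simply connected is controlled over such fields. So the strategy is: (1) reduce to $\bar G$ semisimple simply connected by killing the unipotent radical; (2) reduce to an inner form, hence to a genuine $K$-group $G'$ with $\cal L = \mathrm{lien}(G')$; (3) use the center exact sequence $1 \to Z(G') \to G' \to G'^{\ad} \to 1$ to express the obstruction to neutrality as the image of a class under $H^2(K,G'^{\ad}) \to H^3(K, Z(G'))$, or rather as lying in the image of $H^2(K,Z(G')) \to H^2(K,\cal L)$; (4) invoke the duality/vanishing results available for $K$ of type (a), (b) to conclude that this obstruction vanishes, i.e.\ every class is neutral. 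The delicate bookkeeping is in step (3), matching the nonabelian $H^2$ formalism of \cite{DLA} with the abelian cohomology exact sequences, and in verifying that the relevant abelian group does indeed vanish for our two-dimensional geometric fields — a point where one should cite the precise duality statement rather than reprove it.
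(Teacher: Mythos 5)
Your proposal has the right first move but goes astray in two places, and the final step is left unspecified where the paper pins down the precise arithmetic input.

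First, you assume "under assumption (H) the semisimple part $\bar G^{\ss}$ is simply connected." But the lemma as stated makes no reference to hypothesis (H); the only hypothesis on $\bar G$ is $\bar G = \bar G^{\ssu}$. This matters: in the proof of Proposition \ref{prop eh ssu}, the lemma is applied with $\bar G = \bar H^{\ssu}$, the ssu-part of the \emph{stabilizer}, for which there is no simply-connectedness guarantee. So a proof that relies on simple connectedness would not suffice for the lemma's intended use. The paper's argument avoids this entirely.

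Second, your claim that the band is "necessarily inner" because $\out(\bar G^{\ss})$ is finite is not correct as an argument. Finiteness of the outer automorphism group does not imply that a $K$-lien on $\bar G$ is representable by a $K$-group; representability is equivalent to the existence of a neutral class in $H^2(K,\cal L)$, which is a genuine theorem (Borovoi \cite[Prop.~3.1]{Borovoi93} for reductive $\bar G$). The paper cites exactly this. Without it, you cannot pass from the lien $\cal L^{\ss}$ to a concrete $K$-group $G'$.

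Finally, your step (4) is where the real content lies and it is the step you leave vague ("invoke the duality/vanishing results available"). The paper's route here is both different and sharper: after reducing to $\bar G$ semisimple via \cite[Prop.~4.1]{Borovoi93} and obtaining a neutral class via \cite[Prop.~3.1]{Borovoi93}, it invokes Gonz\'alez-Avil\'es \cite[Ex.~5.4(vi), Thm.~5.8(ii)]{GonzalezModulesCroises}: the field $K$ is ``of Douai type'', and over such fields a class in $H^2(K,\cal L)$ (for $\bar G$ reductive) is neutral if and only if its image in $H^2(K,G^\tor)$ is trivial. Since here $G^\tor = 1$, neutrality is automatic. This criterion does not require simple connectedness and sidesteps the center/$H^3$ bookkeeping you sketch; moreover it is not obvious that the $H^3(K,Z)$ term you implicitly rely on actually vanishes for $Z$ a finite multiplicative type group over these fields (which have cohomological dimension $3$). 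If you want to pursue your route, you would need to make the ``Douai type'' argument (or an equivalent statement) explicit rather than gesturing at duality.
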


\begin{proof}
By \cite[Prop.~4.1]{Borovoi93}, we may assume that $\bar G$ is reductive, hence semisimple. Then \cite[Prop.~3.1]{Borovoi93} tells us that there exists a neutral class in $H^2(K,\cal L)$, so that we can regard $H^2(K,\cal L)$ as $H^2(K,G)$ for some $K$-form $G$ of $\bar G$. The lemma is then a direct consequence of results by Gonz\'alez-Avil\'es. Indeed, by Proposition \ref{prop coh dim 2 and period-index} and \cite[Ex.~5.4(vi)]{GonzalezModulesCroises}, the field $K$ is ``of Douai type''. This allows us to apply \cite[Thm.~5.8(ii)]{GonzalezModulesCroises}, which tells us that a class in $H^2(K,\cal L)$ is neutral if and only if its canonical image in $H^2(K,G^\tor)$ is trivial. But $G^\tor=1$ and the result follows.
\end{proof}

\section{Local-global principle with Brauer-Manin obstruction}

In this section we study a class of homogeneous spaces for which we can prove that the Brauer-Manin obstruction is the only obstruction to the local-global principle. The strategy is to use the validity of this assertion for principal homogeneous spaces and slice our homogeneous space in order to reduce ourselves to this case.\\

We start with a case where there is no need for any obstruction (and not even a local-global principle!). Recall that fields of type (a) and (b) were defined at the beginning of Section \ref{sec preliminaires}.

\begin{pro}\label{prop eh ssu}
Let $K$ be a field of type (a) or (b). Let $Z$ be a homogeneous space under a linear group $G$ with geometric stabilizer $\bar H$ satisfying \eqref{eq hyp}. Assume moreover that $G=G^\ssu$ and $\bar H=\bar H^\ssu$. Then $Z$ has a rational point.
\end{pro}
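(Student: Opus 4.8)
The plan is to reduce to the triviality of a single non-abelian $H^2$-class by showing that $Z$, being a homogeneous space of a group $G$ with $G = G^\ssu$ (so $G^\tor = 1$ and $G^\ss$ simply connected), is dominated by a principal homogeneous space of $G$, and then to observe that such a principal homogeneous space is automatically trivial over our base fields. Concretely, under the hypothesis $\bar H = \bar H^\ssu$, the induced gerb $\cal M^\torf_Z$ is associated to $\bar H^\torf = \bar H / \bar H^\ssu = 1$, so the obstruction living in $H^2(K, H^\torf)$ vanishes trivially; but more is true, and I would instead work directly with the full Springer class $\cal M_Z$.

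First I would recall from \cite{DLA} that the Springer class $\cal M_Z$ is a class in the non-abelian $H^2$ of the $K$-lien attached to $\bar H$, and that $Z$ is dominated by a $G$-torsor (equivalently has a rational point after twisting appropriately) precisely when this class is neutral. The underlying $\bar K$-group here is $\bar H$, which by assumption satisfies $\bar H = \bar H^\ssu$. This is exactly the situation covered by Lemma \ref{lem GA}: every class in $H^2(K, \cal L)$ for such a lien $\cal L$ is neutral. Hence $\cal M_Z$ is neutral, and therefore $Z$ is dominated by a principal homogeneous space $W \to Z$ under $G$, i.e.\ there is a $G$-equivariant $K$-morphism $W \to Z$ with $W$ a $G$-torsor over $K$.

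Next I would show that $W$ is the trivial torsor, i.e.\ $W(K) \neq \emptyset$. Since $G = G^\ssu$ is an extension of the simply connected semisimple group $G^\ss$ by the unipotent group $G^\un$, and $H^1(K, G^\un) = 0$ in characteristic $0$ (unipotent groups over a perfect field are split, so have trivial $H^1$), the map $H^1(K, G) \to H^1(K, G^\ss)$ is a bijection. It therefore suffices to see that $H^1(K, G^\ss) = 0$, i.e.\ that every principal homogeneous space under a simply connected semisimple group over $K$ is trivial. This is a known property of fields of type (a) and (b): such fields have cohomological dimension $2$ and, more precisely, satisfy the relevant Serre-type conjecture II / vanishing statement (for type (b) this is in \cite{IzqDualDim2} and the references therein; for type (a) in \cite{ColliotHarari}, \cite{IzqSMF}), guaranteeing $H^1(K, G^\ss) = 1$. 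Granting this, $W(K) \neq \emptyset$, and the image of a $K$-point of $W$ under $W \to Z$ gives a $K$-point of $Z$.

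The main obstacle I anticipate is not any single step but making sure the two inputs are cleanly available in the stated generality: (i) that the neutrality of the Springer class really does produce a \emph{dominating} principal homogeneous space of $G$ itself (and not merely of some quotient), which I would extract from the gerb-theoretic formalism of \cite{DLA}; and (ii) the vanishing $H^1(K, G^\ss) = 1$ for $G^\ss$ simply connected over fields of type (a) and (b), which must be quoted precisely from the literature. Once both are in hand, the argument is essentially the chain $\cal M_Z$ neutral $\Rightarrow$ $W$ exists $\Rightarrow$ $W(K) \neq \emptyset$ $\Rightarrow$ $Z(K) \neq \emptyset$, with the unipotent part contributing nothing by $H^1(K, G^\un) = 0$.
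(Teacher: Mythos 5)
Your proof is correct and relies on the same two key inputs as the paper's (Lemma~\ref{lem GA} for the neutrality of the nonabelian $H^2$ class attached to $\bar H = \bar H^\ssu$, and Serre's conjecture II over fields of type (a)/(b) from \cite{ColliotGilleParimala} to kill $H^1(K,G^\ss)$), but it is organized more directly. The paper first reduces to $G = G^\ss$ by passing to $Z \to Z/G^\un$, handling the unipotent fibres separately via \cite[Cor.~4.2]{Borovoi93}, and only then applies Lemma~\ref{lem GA} to the quotient homogeneous space; you instead apply Lemma~\ref{lem GA} once, to the full Springer class of $Z$, to produce a dominating $G$-torsor $W$, and then conclude $W(K)\neq\emptyset$ from $H^1(K,G)=1$ via the d\'evissage $1\to G^\un\to G\to G^\ss\to 1$ with $H^1(K,G^\un)=1$ and $H^1(K,G^\ss)=1$. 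This buys you a cleaner argument, avoiding the intermediate quotient and the attendant check on the shape of the new stabilizer. One small remark: you don't actually need the claimed bijection $H^1(K,G)\to H^1(K,G^\ss)$; since both $H^1(K,G^\un)$ and $H^1(K,G^\ss)$ are trivial, the exact sequence of pointed sets already forces $H^1(K,G)$ to be a single point, which is all that is used.
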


\begin{proof}
We prove first that we may assume $G=G^\ss$. Indeed, using \cite[Lem.~3.1]{Borovoi96}, we may consider the map $Z\to Z/G^\un$, where $Z/G^\un$ is a homogeneous space of $G^\ss$ and the fibers are homogeneous spaces of $G^\un$. Now, the Springer class of such a fiber lies in $H^2(K,U)$ for a certain \emph{unipotent} $K$-lien (or $K$-kernel) $U$, which only contains neutral classes by \cite[Cor.~4.2]{Borovoi93}. This implies that the fiber is dominated by a torsor under $G^\un$, which clearly has rational points since $H^1$ is trivial for unipotent groups. Thus, if $Z/G^\un$ has rational points, $Z$ has rational points as well.

Assume then that $G=G^\ss$ and let us start with the case of trivial stabilizers. By Proposition \ref{prop coh dim 2 and period-index}, we know that principal homogeneous spaces of $G=G^\ss$ always have rational points when $G^\ss$ is simply connected. The result for non-trivial stabilizers follows from Lemma \ref{lem GA}, since it implies that every such homogeneous space is dominated by a $G$-torsor.
\end{proof}

We allow now some tori to appear, with a technical hypothesis that ensures that the Brauer-Manin obstruction controls the local-global principle.

\begin{thm}\label{thm Htorf inj Gtor}
Let $K$ be a field of type (a) or (b). Let $Z$ be a homogeneous space under a connected linear group $G$ with geometric stabilizer $\bar H$ satisfying \eqref{eq hyp}. Assume that the natural arrow $\bar H^\torf\to\bar G^\tor$ is injective. Then
\[Z(\bb A_K)^{\Brusse(Z)} \neq\emptyset\,\Rightarrow\, Z(K)\neq\emptyset.\]
\end{thm}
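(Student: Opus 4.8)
The plan is to reduce the statement to the case of principal homogeneous spaces under connected linear groups, for which the Brauer–Manin obstruction is known to be the only one (by the results of Colliot-Thélène–Harari, Izquierdo cited in the introduction, namely Sansuc's theorem over fields of type (a) or (b)). The slicing device is the quotient map $Z \to Z/G^\ssu$ combined with the hypothesis that $\bar H^\torf \to \bar G^\tor$ is injective. First I would use \cite[Lem.~3.1]{Borovoi96} to factor $Z \to Z/G^\un$ and reduce, exactly as in the proof of Proposition~\ref{prop eh ssu}, to the case $G = G^\red$: the fibers are homogeneous spaces under $G^\un$ with neutral Springer class (by \cite[Cor.~4.2]{Borovoi93}), hence dominated by torsors under a unipotent group, which have rational points over any $K_v$ and over $K$ itself, so adelic points and the Brauer group of the base are unaffected. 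Then, writing $T := G^\tor$, the quotient $Z^\tor := Z/G^\ssu$ is a homogeneous space under $T$ with geometric stabilizer $\bar H^\torf$; since $\bar H^\torf \hookrightarrow \bar T = \bar G^\tor$ by hypothesis, $Z^\tor$ is in fact a $K$-torsor under the quotient torus $T' := T/H^\torf$ (here $H^\torf$ is the $K$-form furnished by $\cal M_Z^\torf$, which embeds in $T$ because its geometric version does and the embedding is canonical, hence Galois-equivariant). The fibers of $Z \to Z^\tor$ are homogeneous spaces under $G^\ssu$ with stabilizer $\bar H^\ssu = \bar H^\ssu$, which by Lemma~\ref{lem GA} have neutral Springer class, hence are dominated by $G^\ssu$-torsors.

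The second step is the local-global descent along $Z \to Z^\tor$. Take an adelic point $(P_v) \in Z(\bb A_K)^{\Brusse(Z)}$; its image is an adelic point on the $T'$-torsor $Z^\tor$. I would argue that the pullback to $Z^\tor$ of $\br_1(Z^\tor)/\br_0$ lands in the part of $\Brusse$ that is controlled, i.e.\ that the class of $Z^\tor$ in $H^1(K,T')$ lies in $\Sha^1(K,T')$ after knowing $(P_v)$ is everywhere locally a point; more precisely, the existence of the local points $P_v$ shows $Z^\tor_{K_v}$ is a trivial torsor for all $v \in X^{(1)}$, so the class of $Z^\tor$ is in $\Sha^1(K,T')$. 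Since $T'$ is a torus and (again by the Sansuc-type theorem over $K$ of type (a), (b)) the Brauer–Manin obstruction is the only one for principal homogeneous spaces under connected linear groups, and the relevant $\Sha^1$ of a torus is detected by $\Brusse$ of the torsor, we conclude $Z^\tor(K) \neq \emptyset$ provided the Brauer–Manin condition propagates. The key compatibility to check is that $\Brusse(Z^\tor)$ is hit by $\Brusse(Z)$ under $Z \to Z^\tor$: this is where I would invoke that the fibers have trivial (geometric) Picard group — they are dominated by $G^\ssu$-torsors with $G^\ssu$ having trivial Picard since $G^\ss$ is simply connected — so that $\br_1$ of the total space surjects onto (a subquotient containing) $\br_1$ of the base, via the Leray spectral sequence / \cite{Sansuc81}-type exact sequences. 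Hence the orthogonality of $(P_v)$ to $\Brusse(Z)$ forces orthogonality of the image to $\Brusse(Z^\tor)$, giving a rational point $z \in Z^\tor(K)$.

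Finally, the third step: having $z \in Z^\tor(K)$, the fiber $Z_z := $ (fiber of $Z \to Z^\tor$ over $z$) is a homogeneous space under $G^\ssu_K$ with geometric stabilizer $\bar H^\ssu$, and by Lemma~\ref{lem GA} its Springer class is neutral, so it is dominated by a torsor under $G^\ssu$; since $G^\ss$ is simply connected and $K$ satisfies Serre's conjecture~II (by \cite[Thm.~1.1, 1.2 \& 1.4]{ColliotGilleParimala}, as recalled in Proposition~\ref{prop eh ssu}), that torsor, and hence $Z_z$, has a $K$-rational point. This yields $Z(K) \neq \emptyset$. One has to make sure the adelic point survives the passage to the fiber — i.e.\ choose $z$ so that $Z_z$ still has points in every $K_v$ — which follows from a standard fibration argument over the rational point $z$ of the base, or alternatively can be bypassed since $Z_z$ has rational points unconditionally by the Serre II argument, so no local hypothesis on the fiber is even needed.

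\medskip

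The main obstacle I anticipate is the middle step: the precise bookkeeping of Brauer groups along $Z \to Z^\tor$, namely showing that orthogonality to $\Brusse(Z)$ of the adelic point implies orthogonality of its image to the subgroup of $\br_1(Z^\tor)/\br_0$ that detects the class of the torsor in $\Sha^1(K,T')$. This requires controlling $\br_1$ of a torsor under $G^\ssu$ (using that $\pic \bar{G}^\ssu = 0$, since $G^\ss$ is simply connected and $G^\un$ contributes nothing) and carefully tracking functoriality of the Brauer–Manin pairing through the fibration — the kind of computation that is routine in the number-field setting but needs the exact-sequence inputs \cite[Prop.~2.1]{ColliotHarari}, \cite[Thm.~1.6]{IzqDualDim2} over fields of type (a), (b). Everything else (the unipotent reduction, the identification of $Z^\tor$ as a $T'$-torsor, the final Serre~II step) is comparatively formal given the lemmas already established.
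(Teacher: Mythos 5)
Your proposal follows the same strategy as the paper: slice along $Z \to Z' := Z/G^{\ssu}$, identify $Z'$ as a torsor under the torus $G^{\tor}/H^{\torf}$ using the injectivity hypothesis and \cite[Lem.~3.1]{Borovoi96}, invoke the local-global principle with Brauer--Manin obstruction for torus torsors to get $Z'(K)\neq\emptyset$, and finish by applying Proposition~\ref{prop eh ssu} (Serre~II plus Lemma~\ref{lem GA}) to the fiber. Two remarks, however, on where you see friction.

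First, the preliminary reduction to $G=G^{\red}$ is unnecessary: the fiber of $Z\to Z'$ over a rational point is a homogeneous space under $G^{\ssu}$ with geometric stabilizer $\bar H^{\ssu}$, which is \emph{exactly} the setting of Proposition~\ref{prop eh ssu} (whose own proof absorbs the unipotent step). Second, and more substantively, the ``main obstacle'' you flag in the middle step is not an obstacle at all, and the Picard-group/Leray machinery you gesture at points in the wrong direction. What one needs is not that $\Brusse(Z)$ surjects onto $\Brusse(Z')$, but only that $f^*\colon \br_1(Z')/\br_0(Z') \to \br_1(Z)/\br_0(Z)$ carries $\Brusse(Z')$ \emph{into} $\Brusse(Z)$. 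This is automatic: pullback preserves ``algebraic'', preserves ``coming from the base field'', and commutes with restriction to each $K_v$, so if $\alpha\in\Brusse(Z')$ then $f^*\alpha\in\Brusse(Z)$. Then for $(P_v)$ orthogonal to $\Brusse(Z)$ one has
\[\BM\bigl(f(P_v),\alpha\bigr)=\BM\bigl((P_v),f^*\alpha\bigr)=0,\]
so $f(P_v)\in Z'(\bb A_K)^{\Brusse(Z')}$, and \cite[Cor.~8.3]{ColliotHarari}, \cite[Cor.~2.10]{IzqSMF}, \cite[Thm.~4.2]{IzqDualDim2} apply directly. No control of $\pic$ of the fibers or surjectivity of Brauer groups is required; this is the one-line ``functoriality of the Brauer pairing'' that the paper invokes.
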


\begin{proof}
Since $G$ acts on $Z$, we may consider the quotient variety $Z'=Z/G^\ssu$. Since $\bar H^\torf\to\bar G^\tor$ is injective, we know by \cite[Lem.~3.1]{Borovoi96} that $Z'$ is a homogeneous space of $G^\tor$ with geometric stabilizer $\bar H^\torf$ and the geometric fibers of the quotient morphism $Z\to Z'$ are homogeneous spaces of $\bar G^\ssu$ with geometric stabilizers (isomorphic to) $\bar H^\ssu$. Consider then an adelic point $(P_v)\in Z(\bb A_K)^{\Brusse (Z)}$ and push it to $Z'$. By the functoriality of the Brauer pairing, its image lies in $Z'(\bb A_K)^{\Brusse(Z')}$. Since $Z'$ is a homogeneous space of a torus, it is also a principal homogeneous space of another torus (namely, of  $G^\tor/H^\torf$, where $H^\torf$ is the $K$-form of $\bar H^\torf$ associated to $Z$) and hence, by \cite[Cor.~8.3]{ColliotHarari} (semi-global case) and \cite[Thm.~4.2]{IzqDualDim2} (local case), we know that there is a $K$-point $P_0\in Z'(K)$. The fiber above $P_0$ is a homogeneous space satisfying the hypotheses of Proposition \ref{prop eh ssu}, hence it has a rational point, which implies that $Z(K)\neq\emptyset$.
\end{proof}

As we will see in the subsequent sections, this is as far as we can go with the Brauer-Manin obstruction, in the sense that under more general hypotheses the Brauer-Manin obstruction will not be enough to explain counterexamples to the local-global principle for homogeneous spaces.

We conclude this section with a result that tells us that the Brauer-Manin obstruction with respect to $\Brusse(Z)$ factors through a finite quotient.

\begin{pro}\label{prop fact brusse barrre}
Let $K$ be a field of type (a) or (b). Let $Z$ be a homogeneous space under a connected linear group $G$ with geometric stabilizer $\bar H$ and satisfying \eqref{eq hyp}. Assume that the natural arrow $\bar H^\torf\to\bar G^\tor$ is injective. Then the Brauer pairing
\[Z(\bb A_K)\times \Brusse(Z)\to \Q/\Z,\]
factors on the right through the quotient $\ov{\Brusse(Z)}$ of $\Brusse(Z)$ by its maximal divisible subgroup.
\end{pro}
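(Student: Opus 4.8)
The plan is to reduce to the case of a torsor under a torus, by the same slicing used in the proof of Theorem~\ref{thm Htorf inj Gtor}, and then to treat that case via Sansuc's description of the Brauer--Manin pairing together with the finiteness of $\Sha^1$ for tori over $K$. If $Z(\bb A_K)=\emptyset$ there is nothing to prove, so assume $Z(\bb A_K)\neq\emptyset$; then $Z(K_v)\neq\emptyset$ for every $v\in X^{(1)}$. Put $Z':=Z/G^\ssu$ and $T_0:=G^\tor/H^\torf$: then $Z'$ is a torsor under the $K$-torus $T_0$, and the quotient map $\pi\colon Z\to Z'$ is smooth and surjective with geometric fibres homogeneous spaces of $\bar G^\ssu$ with stabilizer $\bar H^\ssu$.

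The first step is to show that $\pi^*$ identifies $\Brusse(Z')$ with $\Brusse(Z)$ compatibly with the Brauer--Manin pairings. The crucial point is that $\pic\bar Z=0$: since $G^\ss$ is simply connected one has $\pic\bar G=0$ and $\widehat{\bar G}=\widehat{\bar G^\tor}$ (see \cite{Sansuc81}); moreover $\widehat{\bar H}=\widehat{\bar H^\torf}$ because $\bar H^\ssu$ has no nontrivial characters, and the restriction map $\widehat{\bar G^\tor}\to\widehat{\bar H^\torf}$ is surjective because $\bar H^\torf$ embeds into the torus $\bar G^\tor$; feeding this into the exact sequence $\widehat{\bar G}\to\widehat{\bar H}\to\pic\bar Z\to\pic\bar G$ gives $\pic\bar Z=0$, and likewise $\pic\bar Z'=0$, while $\bar K[\bar Z]^*/\bar K^*=\bar K[\bar Z']^*/\bar K^*=\hat T_0$ compatibly with $\pi^*$. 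The Hochschild--Serre spectral sequence then identifies $\br_1(Z)/\br_0(Z)$ with the kernel of the connecting map $H^2(K,\hat T_0)\to H^3(K,\gm)$ attached to the extension $1\to\gm\to\bar K[\bar Z]^*\to\hat T_0\to1$, and similarly for $Z'$; as these two extensions coincide under $\pi^*$ (both having class $[Z']\in\ext^1_K(\hat T_0,\gm)=H^1(K,T_0)$), one obtains an isomorphism $\pi^*\colon\br_1(Z')/\br_0(Z')\xrightarrow{\sim}\br_1(Z)/\br_0(Z)$, and the same after base change to any $K_v$, whence an isomorphism $\Brusse(Z')\xrightarrow{\sim}\Brusse(Z)$. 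By functoriality of $\BM$ it then suffices to prove the proposition for the torsor $Z'$.

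For $Z'$, the embedding above realizes $\Brusse(Z')$ as a subgroup of $\Sha^2(K,\hat T_0)$, and since $Z'(K_v)\neq\emptyset$ for all $v\in X^{(1)}$ the class $[Z']$ lies in $\Sha^1(K,T_0)$. For $\alpha\in\Brusse(Z')$ the local classes $\alpha|_{K_v}$ lie in $\br_0(Z'_{K_v})$, hence take a constant value on $Z'(K_v)$, so $\BM((Q_v),\alpha)$ does not depend on the adelic point $(Q_v)$; Sansuc's computation of the Brauer--Manin pairing for torsors under tori, in the form used in \cite{ColliotHarari} and \cite{IzqDualDim2}, identifies this common value with $\langle\alpha,[Z']\rangle$ for the arithmetic duality pairing $\langle\,,\,\rangle\colon\Sha^2(K,\hat T_0)\times\Sha^1(K,T_0)\to\Q/\Z$ of those works. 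Now $\Sha^1(K,T_0)$ is finite, by \cite{ColliotHarari} in case~(a) and by \cite{IzqDualDim2} in case~(b); therefore the homomorphism $\Sha^2(K,\hat T_0)\to\hom(\Sha^1(K,T_0),\Q/\Z)$, $\alpha\mapsto\langle\alpha,-\rangle$, has finite target and so vanishes on the maximal divisible subgroup of $\Sha^2(K,\hat T_0)$, and therefore on the maximal divisible subgroup of $\Brusse(Z')$ (the latter being a divisible subgroup of the former). Specialising at $[Z']$, the pairing $Z'(\bb A_K)\times\Brusse(Z')\to\Q/\Z$ vanishes on divisible elements, i.e.\ factors through $\ov{\Brusse(Z')}$; transporting back through $\pi^*$ yields the claim for $Z$.

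The step I expect to be most delicate is the Hochschild--Serre comparison making $\pi^*$ an isomorphism on $\br_1/\br_0$ --- this is precisely where the hypothesis $\bar H^\torf\hookrightarrow\bar G^\tor$ is used, via $\pic\bar Z=0$ --- together with checking that Sansuc's identification of the Brauer--Manin pairing of a torsor under a torus with the arithmetic duality pairing goes through over these two-dimensional fields; once these are in place, the finiteness of $\Sha^1(K,T_0)$ provided by \cite{ColliotHarari}/\cite{IzqDualDim2} finishes the argument.
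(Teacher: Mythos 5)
Your proof is correct, and its skeleton matches the paper's exactly: pass to the quotient $Z'=Z/G^{\ssu}$, which is a torsor under $T_0=G^{\tor}/H^{\torf}$; transfer the statement along an isomorphism $\Brusse(Z')\xrightarrow{\sim}\Brusse(Z)$; and settle the torsor case over $K$. The paper, however, handles the two key ingredients entirely by citation --- the isomorphism $\Brusse(Z')\cong\Brusse(Z)$ comes from \cite[Thm.~7.2]{BorovoiVH} applied to the complex $[H^{\torf}\to G^{\tor}]$, and the torus case is imported from \cite[Thm.~2.8]{IzqSMF} together with \cite[Thm.~4.2]{IzqDualDim2} --- whereas you unpack both. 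For the isomorphism you verify directly that $\pic\bar Z=\pic\bar Z'=0$ (using $G^{\ss}$ simply connected and the injectivity of $\bar H^{\torf}\hookrightarrow\bar G^{\tor}$, which is indeed exactly where the extra hypothesis enters) and that $\bar K[\bar Z]^*$ and $\bar K[\bar Z']^*$ define the same extension of $\hat T_0$ by $\gm$, of class $[Z']$; the Hochschild--Serre spectral sequence then gives a compatible identification of the two $\br_1/\br_0$ groups, and hence of the two $\Brusse$ groups. For the torus case you rederive the factorization through $\ov{\Brusse(Z')}$ from the Poitou--Tate pairing $\Sha^2(K,\hat T_0)\times\Sha^1(K,T_0)\to\Q/\Z$ and the finiteness of $\Sha^1(K,T_0)$. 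This is a sound and more self-contained version of the same argument; the only point you should make explicit when writing it up is that the cited duality theorems of \cite{ColliotHarari} and \cite{IzqDualDim2} do supply the finiteness of $\Sha^1(K,T_0)$ itself (not merely of its quotient by divisible elements), since this is the precise fact that makes $\alpha\mapsto\langle\alpha,-\rangle$ land in a finite group and thereby kill the maximal divisible subgroup.
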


\begin{proof}
Consider, as in the last proof, the quotient variety $Z'=Z/G^\ssu$, which is a principal homogeneous space under the torus $G^\tor/H^\torf$. By \cite[Thm.~2.8]{IzqSMF} (semi-global case) and \cite[Thm.~4.2]{IzqDualDim2} (local case), we know that the result holds for $Z'$. Since $G^\tor=(G/G^\ssu)^\tor$, we see by \cite[Thm.~7.2]{BorovoiVH} that
\[\bral(Z)\cong H^2(K,[\hat G^\tor\to \hat H^\torf])\cong \bral(Z').\]
We get then an isomorphism $\Brusse(Z')\xrightarrow{\sim}\Brusse(Z)$. The functoriality of the Brauer pairing gives us then the result for $Z$.
\end{proof}

The proof of this result tells us that in fact $\overline{\Brusse(Z)}$ is isomorphic to $\overline{\Brusse(Z')}$ with $Z'$ a torsor under a $K$-torus, and we know by Poitou-Tate duality (cf.~\cite[Thm.~7.2]{ColliotHarari} and \cite[Thm.~4.8]{IzqDualDim2}) that this group is finite.

\section{Counter-examples}

As stated in the Introduction, one can easily produce examples of homogeneous spaces $Z$ over $K$ that fail the local-global principle with respect to the small set of places $X^{(1)}$ and such that $Z(\bb A_K)^{\br(Z)}$ is nonempty. Indeed, consider the quasi-trivial $K$-torus $T=R_{L/K}(\gm)$ for some finite extension $L/K$ such that $\Sha^2(L,\gm)\neq 0$ (this can be done by \cite[\S~2.3]{ColliotHarari} and \cite[\S~1]{IzqDualDim2}). Take a non-zero class $\alpha \in \Sha^2(K,T)=\Sha^2(L,\gm)$. By \cite[Prop.~2.1 \& Cor.~3.3]{DLA}, there exists a positive integer $n$, a homogeneous space $Z/L$ under $\sl_{n,K}$ and a geometric point $z \in Z(\bar{K})$ such that the torus $T_{\bar{K}}$ is the stabilizer of $z$ and $\alpha$ is the Springer class of $Z$. Since $\alpha \neq 0\in \Sha^2(K,T)$, the set $Z(K)$ is empty and the set $Z(K_v)$ is non-empty for every discrete valuation $v\in X^{(1)}$. Moreover, by \cite[Thm.~7.2]{BorovoiVH}, we have $\bral(Z)\simeq H^1(K,\hat T)=H^1(L,\Z)=0$. Finally, since $\bar{Z}\cong \sl_{n,\bar{K}}/\bar{T}$, the generically trivial $\bar{T}$-torsor $\sl_{n,\bar{K}}\rightarrow \bar{Z}$ induces an injection of Brauer groups $\mathrm{Br}(\bar{Z}) \hookrightarrow \mathrm{Br}(\sl_{n,\bar{K}})$. But $\mathrm{Br}(\sl_{n,\bar{K}})=0$ by \cite[\S0]{SGilleBrGss}, and hence $\mathrm{Br}(\bar{Z})=0$. We have then trivially $Z(\bb A_K)^{\br(Z)}=Z(\bb A_K)\neq \emptyset$.\\

These examples suggest that, in order to get an obstruction that controls the local-global principle for homogeneous spaces with connected stabilizers, one needs to take into account local information coming from places outside $X^{(1)}$. However, as we prove here below, after replacing $K$ by a finite extension, one can construct homogeneous spaces $Z$ that fail the local-global principle with respect to the set $\Omega_K$ of \emph{all} places of $K$ and such that $Z(\bb A_K)^{\br(Z)}$ is nonempty. In further sections, we will develop more involved obstructions using torsors under quasi-trivial tori, that explain all these failures of the local-global principle.\\

The main result of this section is the following:

\begin{thm}\label{thm counter}
Let $K$ be a field of type (a) or (b). Assume that the special fiber $X_0$ contains three smooth and integral divisors $L_1,L_2,L_3$ such that, for each $i\neq j$, the divisors $L_i$ and $L_j$ intersect at a single point and the intersection is transversal. Then there exists a homogeneous space $Z/K$ under $\sl_{n,K}$ and a geometric point $z \in Z(\bar{K})$ satisfying the following conditions:
\begin{itemize}
\item[(i)] the (geometric) stabilizer of $z$ is a torus;
\item[(ii)] for any discrete valuation $v$ on $K$, the set $Z(K_v)$ is non-empty;
\item[(iii)] the set $Z(\mathbb{A}_K)^{\br(Z)}$ is non-empty;
\item[(iv)] the set $Z(K)$ is empty.
\end{itemize}
\end{thm}

\begin{rem}\label{rem exist counter}
For any field $K$ of type (a) or (b) one can construct a finite extension satisfying the assumption in Theorem \ref{thm counter}. The interested reader can find explicit examples of such fields by looking at \cite[Ex.~5.6. \& 5.8]{ColliotParimalaSuresh}.
\end{rem}

The proof, which has 8 steps, goes as follows: In Step 1 we fix some notations associated to the field $K$ and the divisors $L_i$. In Steps 2--4, we define a torus $S$ and a homogeneous space $Z$ whose Springer class lies in $\Sha^2_\mathrm{tot} (K,S)$, which ensures that properties (i), (ii) and (iv) hold. In Step 5, we construct auxiliary varieties
\[\xymatrix@C=.5em{
& W \ar[rd] \ar[dl] & & \tilde V \ar[dl] \\
Z & & V & 
}\]
where all arrows are torsors under $K$-rational groups. This gives us isomorphisms of the corresponding unramified Brauer groups. Moreover, both $V$ and $\tilde V$ are torsors under suitable $K$-tori that come from constructions by Colliot-Th\'el\`ene in \cite{ColliotBordeaux}, where their unramified Brauer groups were explicitly given. This allows us to compute the unramified Brauer group of $Z$ in Step 6. Step 7 uses this information in order to compute explicitly the (whole) Brauer group of $Z$. Finally, we use this in order to check (iii) in Step 8.

\begin{proof}

\emph{Step 1: Recalling the constructions from \cite[\S5.2]{ColliotParimalaSuresh}.} Set:
$$m_1:= L_2 \cap L_3,\;\;\;\;\;m_2:= L_1 \cap L_3,\;\;\;\;\;m_3:= L_1 \cap L_2.$$
 Since the Picard group of a semi-local ring is always trivial  one can construct three elements $\pi_1,\pi_2,\pi_3\in K^{\times}$ in the following way: 
\begin{itemize} 
\item[(a)] First, one chooses $\pi_1 \in K^{\times}$ so that the support of $D_1:=\mathrm{div}_{\mathcal{X}}(\pi_1)-L_1$ does not contain any of the $m_i$'s. 
\item[(b)] Secondly, one chooses a place $v_0 \in \mathcal{X}^{(1)} \setminus X_0^{(0)}$ that is not contained in the support of $D_1$. One can then choose $\pi_2 \in K^{\times}$ so that:
\begin{itemize}
\item[(i)] $v_0(\pi_2)=1$,
\item[(ii)] the support of $D_2:=\mathrm{div}_{\mathcal{X}}(\pi_2)-L_2$ does not contain any of the $m_i$'s, any of the components of the support of $D_1$ and any of the intersection points of one of the $L_i$'s with the support of $D_1$. 
\end{itemize}
\item[(c)] Finally, one chooses $\pi_3 \in K^{\times}$ so that:
\begin{itemize}
\item[(i)] the classes of $\pi_1$ and $\pi_3$ in the quotient $k(v_0)^{\times}/{k(v_0)^{\times}}^2$ are not equal,
\item[(ii)] the support of $D_3:=\mathrm{div}_{\mathcal{X}}(\pi_3)-L_3$ does not contain any of the $m_i$'s, any of the components of the support of $D_1$, any of the components of the support of $D_2$, any of the intersection points of one of the $L_i$'s with the support of $D_1$, any of the intersection points of one of the $L_i$'s with the support of $D_2$ and any of the intersection points of the supports of $D_1$ and $D_2$. 
\end{itemize}
\end{itemize}

\emph{Step 2: Defining the torus $S$ as well as some other useful auxiliary tori.} Set $a:=\pi_2\pi_3$ and $b:=\pi_3\pi_1$, introduce the field $M:=K(\sqrt{a},\sqrt{b})$, and consider the normic torus $Q$ and the multinormic torus $\tilde{Q}$ defined by the following exact exact sequences:
\begin{gather}
    1 \rightarrow Q \rightarrow R_{M/K}(\mathbb{G}_m) \rightarrow \mathbb{G}_m \rightarrow 1,\label{eq Colliot 1} \\ 
    1 \rightarrow \tilde{Q} \rightarrow  R_{K(\sqrt{a})/K}(\mathbb{G}_m)\times R_{K(\sqrt{b})/K}(\mathbb{G}_m) \times R_{K(\sqrt{ab})/K}(\mathbb{G}_m) \rightarrow \mathbb{G}_m \rightarrow 1.
    \end{gather}
Note that, by \cite[Prop.~3.1(b)]{ColliotBordeaux}, we have an exact sequence:
\begin{equation}\label{eq Colliot}
    1 \rightarrow \mathbb{G}_m^2 \rightarrow \tilde{Q} \rightarrow Q \rightarrow 1.
    \end{equation}
We can now introduce a torus $S$ by writing down a resolution of $Q$
\begin{equation}\label{flasque}
1 \rightarrow S \rightarrow R \rightarrow Q \rightarrow 1,
\end{equation}
with $R$ is a quasi-trivial $K$-torus split by $M$. In particular, $S$ is also split by $M$. \\

\emph{Step 3: Computing the Tate-Shafarevich group $\Sha^2_{\mathrm{tot}}(K,S)$.} By Shapiro's Lemma and Hilbert's Theorem 90, we have $H^1(K,R)=0$. Hence the short exact sequence (\ref{flasque}) induces an exact sequence:
$$0 \rightarrow \Sha^1_{\mathrm{tot}}(K,Q) \rightarrow \Sha^2_{\mathrm{tot}}(K,S) \rightarrow \Sha^2_{\mathrm{tot}}(K,R).$$
Moreover, by \cite[Cor.~5.3 \& 5.7]{ColliotParimalaSuresh}, which apply to the torus $Q$ from Step 2, the group $\Sha^1_{\mathrm{tot}}(K,Q)$ is non trivial. Hence $\Sha^2_{\mathrm{tot}}(K,S)$ is non trivial.\\

\emph{Step 4: Defining the homogeneous space $Z$ and proving statements (i), (ii) and (iv).} Take a non-zero class $\alpha \in \Sha^2_{\mathrm{tot}}(K,S)$. As before, by \cite[Prop.~2.1 \& Cor.~3.3]{DLA}, there exists a positive integer $n$, a homogeneous space $Z/K$ under $\sl_{n,K}$ and a geometric point $z \in Z(\bar{K})$ such that the torus $S_{\bar{K}}$ is the stabilizer of $z$ and $\alpha$ is the Springer class of $Z$. Since $\alpha \neq 0$, the set $Z(K)$ is empty. Since $\alpha \in \Sha^2_{\mathrm{tot}}(K,S)$, the set $Z(K_v)$ is non-empty for every discrete valuation $v$ on $K$. We have therefore settled (i), (ii) and (iv). The remaining steps will be devoted to the proof of statement (iii). \\

\emph{Step 5: Constructing an auxiliary torsor $\tilde{V}$ under $\tilde{Q}$ that is stably birational to $Z$.} By Proposition \ref{prop geom const}, which will be proved in next section, one can construct a $K$-homogeneous space $W$ under $\sl_{n,K}\times R$  together with a morphism of $K$-homogeneous spaces $p:W\rightarrow Z$ which makes $W$ into an $R$-torsor. Since $R$ is quasi-trivial and $\prod_{v\in \Omega_K} Z(K_v)\neq \emptyset$, Hilbert's theorem 90 shows that $\prod_{v\in \Omega_K} W(K_v)\neq \emptyset$. 

Now consider the quotient $V:=W/\sl_{n,K}$ and the natural projection $q:W \rightarrow V$. The $K$-variety $V$ is a $Q$-torsor such that $\prod_{v\in \Omega_K} V(K_v)\neq \emptyset$. Its class in $H^1(K,Q)$ therefore belongs to $\Sha^1_{\mathrm{tot}}(K,Q)$. Moreover, the short exact sequence \eqref{eq Colliot} and the triviality of the group $H^1(K,\mathbb{G}_m)$ induce an exact sequence of Tate-Shafarevich groups:
$$0 \rightarrow \Sha^1_{\mathrm{tot}}(K,\tilde{Q}) \rightarrow \Sha^1_{\mathrm{tot}}(K,Q) \rightarrow \Sha^2_{\mathrm{tot}}(K,\mathbb{G}_m)^2=0,$$
and hence an isomorphism $\Sha^1_{\mathrm{tot}}(K,\tilde{Q}) \cong \Sha^1_{\mathrm{tot}}(K,Q)$. The $Q$-torsor $V$ therefore comes from a $\tilde{Q}$-torsor $\tilde{V}$ that has points in every completion of $K$. The natural morphism $r:\tilde{V}\rightarrow V$ is a $\mathbb{G}_m^2$-torsor. \\

\emph{Step 6: Using the torsor $\tilde{V}$ to compute the unramified Brauer group of $K(Z)/K$.} Since the groups $R$, $\sl_{n,K}$ and $\mathbb{G}_m^2$ are all $K$-rational, we have the following isomorphisms of unramified Brauer groups:
$$\mathrm{Br}_{\mathrm{nr}}(K(Z)/K) \xrightarrow[p^*]{\cong} \mathrm{Br}_{\mathrm{nr}}(K(W)/K) \xleftarrow[q^*]{\cong} \mathrm{Br}_{\mathrm{nr}}(K(V)/K) \xrightarrow[r^*]{\cong} \mathrm{Br}_{\mathrm{nr}}(K(\tilde{V})/K). $$
But the $\tilde{Q}$-torsor $\tilde{V}$ is given by an equation of the form:
$$(x_1^2-ax_2^2)(y_1^2-by_2^2)(x_3-aby_3^2)=c,$$
for some $c \in K^{\times}$, and then, by \cite[Thm.~4.1]{ColliotBordeaux}, the quotient $\mathrm{Br}_{\mathrm{nr}}(K(\tilde{V})/K)/\im(\mathrm{Br}(K))$ is a cyclic group of order $2$ generated by the class of the quaternion algebra $A:=(x_1^2-ax_2^2,b)$. Hence $\mathrm{Br}_{\mathrm{nr}}(K(Z)/K)/\im(\mathrm{Br}(K))$ is also a cyclic group of order $2$ generated by $\gamma:=(p^*)^{-1}q^*(r^*)^{-1}([A])$.\\

\emph{Step 7: Computing the Brauer group of $Z$.}
By Step 6, the algebraic Brauer group $\bral(Z)$ contains the non-zero class $\gamma$ (note that $A$ is algebraic). Let us prove that $\bral(Z) = \{0,\gamma\}$. Since $\bral(Z)\cong H^1(K,\hat{S})$ by \cite[Thm.~7.2]{BorovoiVH}, it suffices to check that $H^1(K,\hat{S})$ has order at most $2$. By inflation-restriction, we have an isomorphism:
$$ H^1(M/K,\hat{S}) \cong  H^1(K,\hat{S}) .$$
Moreover, by definition of the torus $S$, we have an exact sequence of $\gal(M/K)$-modules:
$$0 \rightarrow \hat{Q} \rightarrow \hat{R} \rightarrow \hat{S} \rightarrow 0$$
that induces an exact sequence:
$$0 = H^1(M/K,\hat{R}) \rightarrow H^1(M/K,\hat{S}) \rightarrow H^2(M/K,\hat{Q}) \rightarrow H^2(M/K,\hat{R}).$$
Moreover, by dualizing the exact sequence \eqref{eq Colliot 1}, we get:
$$H^2(M/K,\hat{Q}) \cong H^3(M/K,\mathbb{Z}) \cong H^3\left( (\mathbb{Z}/2\mathbb{Z})^2,\mathbb{Z}\right) \cong \mathbb{Z}/2\mathbb{Z},$$
and hence $H^1(K,\hat{S})$ has order at most $2$, as wished.

Finally, using the same argument given at the beginning of this section, we see that the geometric Brauer group of $Z$ is trivial. We conclude then that
$$\mathrm{Br}(Z)/\mathrm{Br}_0(Z) = \mathrm{Br}_{\mathrm{al}}(Z) = \{0,\gamma\}.$$

\emph{Step 8: Checking that $Z(\mathbb{A}_K)^{\br(Z)}\neq \emptyset$.} Consider the map:
$$\mathrm{ev}_{A,v_0}: \tilde{V}(K_{v_0}) \rightarrow \mathrm{Br}(K_{v_0}).$$
Since the extension $K_{v_0}(\sqrt{a},\sqrt{b})/K_{v_0}$ has degree $4$, $v_0(a)=1$ and $v_0(b)=0$, \cite[Prop.~4.3]{ColliotParimalaSuresh} shows that $\mathrm{Im}(\mathrm{ev}_{A,v_0})=\{ 0,(-a,b)\}=\mathrm{Br}(K_{v_0})[2]$. We deduce that there exists $(\tilde P_v) \in \tilde{V}(\mathbb{A}_K)$ such that $\BM((\tilde P_v),[A])=0$. Hence $\BM(r(\tilde P_v),(r^*)^{-1}([A]))=0$. 

Now recall that the projection $q:W \rightarrow V$ is an $\sl_{n,K}$-torsor. Since $\sl_{n,K}$-torsors over a field are always trivial, we can find an adelic point $(P_v) \in W(\mathbb{A}_K)$ that lifts $(r(\tilde P_v)) \in V(\mathbb{A}_K)$. We then have $\BM((P_v),q^*(r^*)^{-1}([A]))=0$, and hence
$$\BM(p(P_v),\gamma) = \BM(p(P_v),(p^*)^{-1}q^*(r^*)^{-1}([A]))=0.$$
Since $\mathrm{Br}(Z)/\mathrm{Br}_0(Z) = \{0,\gamma\}$ according to Step 7, we deduce that:
$$p(P_v) \in Z(\mathbb{A}_K)^{\mathrm{Br}(Z)}.$$
This finishes the proof of statement (iii) and hence the proof of the theorem.
\end{proof}

\begin{rem}
It is highly likely that there are such counterexamples over \emph{every} field $K$ of type (a) and (b). Indeed, consider a finite extension $L/K$ such that $L$ satisfies the assumptions of Theorem \ref{thm counter}. Consider then the Weil restriction $R_{L/K}(Z)$, where $Z$ is defined as in Theorem \ref{thm counter}. This variety is a homogeneous space under $R_{L/K}(\mathrm{\sl}_{n,L})$. It is evident that this variety satisfies conditions (ii) and (iv). Finally, for (iii), note that $\overline{R_{L/K}(Z)}$ is simply a self-product of $\bar{Z}$ and hence $\br(\overline{R_{L/K}(Z)})=0$, so that
\begin{align*}
\br(R_{L/K}(Z))/\br_0(R_{L/K}(Z))&=\bral(R_{L/K}(Z))=H^1(K,I_{L/K}(\hat{S}))\\&\cong H^1(L,\hat{S})\cong \br(Z)/\br_0(Z).
\end{align*}
One should check then that the Brauer pairing is compatible with these isomorphisms and the bijection between $M$-points of $K$ and $M\otimes_K L$-points of $Z$ for $M/K$. This seems to be a tedious straightforward computation.
\end{rem}

\section{An obstruction using torsors under quasi-trivial tori}

Let $Z$ be a homogeneous space under a linear connected $K$-group $G$ with geometric stabilizer $\bar H$ satisfying \eqref{eq hyp}. In the previous section, we have seen that the usual Brauer-Manin obstruction is not enough to explain the failure of the local-global principle when $Z$ does not satisfy the technical hypothesis from Theorem \ref{thm Htorf inj Gtor}, that is, the injectivity of the natural arrow $\bar H^\torf\to\bar G^\tor$. This is why, in this section, we aim at constructing a stronger obstruction by taking into account more places of $K$ than those in $X^{(1)}$ and applying the Brauer-Manin obstruction to torsors under quasi-trivial tori over $Z$.

For that purpose, we start by defining a new set $\Omega_{Z}$ of places of $K$ as follows. Consider the canonical $K$-form $H^\torf$ of $\bar H^\torf$ associated to $Z$. We introduce the following notations:
\begin{itemize}
\item $L/K$ is the minimal extension splitting the group of multiplicative type $H^\torf$;
\item $B$ is the normalization of $A$ in $L$;
\item $\cal Y$ is an integral regular $2$-dimensional scheme with function field $K$ endowed with a projective surjective morphism $q:\cal Y\to\spec(B)$ such that its special fiber $Y_0$ is a strict normal crossings divisor;
\item $Y$ is the generic fiber of $q$;
\item $\Omega_{Z}$ is the set of places $v$ of $K$ that are induced by a place $w\in\cal Y^{(1)}$;
\item $\Omega_{0,Z}$ is the set of places $v$ of $K$ that are induced by a place $w\in\cal Y^{(1)}$ and that are \emph{not} in $X^{(1)}$.
\end{itemize}
Note that $\Omega_{0,Z}$ may contain places that are not in $X_0^{(0)}=\cal X^{(1)}\smallsetminus X^{(1)}$ and that both $\Omega_Z$ and $\Omega_{0,Z}$ depend on the choice of $\cal Y$.

Finally, we fix an inclusion $H^\torf\hookrightarrow T$ into a torus $T$ isomorphic to $(R_{L/K}\gm)^n$ for some $n \geq 0$. Note that such an inclusion always exists.\\

We start with an application of \cite{DLA} that allows us to construct a torsor over $Z$ under $T$.

\begin{pro}\label{prop geom const}
With notation as above, assume that $Z(K_v)\neq\emptyset$ for every $v\in\Omega_Z$. Then there exists a $K$-homogeneous space $W_{Z,G,\overline{H}}$ under $G\times T$ with geometric stabilizer $\bar H^\torf$ with the following extra properties:
\begin{itemize}
\item the natural homomorphism $\bar H^\torf\to (\bar G\times \bar T)^\tor=\bar G^\tor\times \bar T$ is injective;
\item there is a morphism of $K$-homogeneous spaces $p:W_{Z,G,\overline{H}}\to Z$ which makes $W_{Z,G,\overline{H}}$ into a $T$-torsor.
\end{itemize}
\end{pro}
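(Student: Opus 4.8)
The plan is to obtain $W$ by \emph{lifting} the homogeneous space $Z$ along the projection $G\times T\to G$, using the construction of \cite[Prop.~2.1 \& Cor.~3.3]{DLA}; the only step requiring a genuine argument is the vanishing of the cohomological obstruction to this lifting, and that is where the local hypothesis and the quasi-triviality of $T$ come in. Concretely, I would first fix the embedding to be used: since the chosen inclusion $H^\torf\hookrightarrow T$ is injective, the homomorphism $\bar H\to G\times T$, $h\mapsto(h,\psi(h))$, where $\psi$ is the composite $\bar H\twoheadrightarrow\bar H^\torf=H^\torf\hookrightarrow T$, is a closed immersion; and since $\ker\psi=\bar H^\ssu$ maps trivially to the torus $G^\tor$, the composite of this immersion with $G\times T\to(G\times T)^\tor=G^\tor\times T$ has kernel exactly $\bar H^\ssu$, so that the induced map $\bar H^\torf\to(G\times T)^\tor$ is injective. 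With this embedding in hand, the content of \cite{DLA} is that producing a homogeneous space $W$ under $G\times T$ with the properties listed in the statement amounts to checking that the image of the Springer gerb $\cal M_Z$ in the non-abelian cohomology set $H^2(K,G\times T)$ is neutral. Now the $G$-component of this image is neutral by the very definition of $\cal M_Z$ (the morphism of gerbs $\cal M_Z\to\underline{\mathrm{TORS}}(G)$); and since $K$ is of Douai type (\cite[Ex.~5.4(vi)]{GonzalezModulesCroises}, cf.\ the proof of Lemma \ref{lem GA}), neutrality in $H^2(K,G\times T)$ is detected on $H^2(K,(G\times T)^\tor)=H^2(K,G^\tor)\times H^2(K,T)$. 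So everything reduces to the vanishing of the $T$-component, that is, of the image $\tau\in H^2(K,T)$ of $\eta^\torf$ under $H^\torf\hookrightarrow T$.

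To prove $\tau=0$, which is the heart of the matter, write $T=(R_{L/K}\gm)^n$ with $L$ the splitting field of $H^\torf$. Shapiro's lemma gives identifications $H^2(K,T)\cong\br(L)^n$ and $H^2(K_v,T)\cong\bigoplus_{w\mid v}\br(L_w)^n$, compatible with the localization maps, under which the localization at a place $v$ of $K$ is the direct sum of the restriction maps $\br(L)\to\br(L_w)$ over the places $w$ of $L$ above $v$. For each $v\in\Omega_Z$, the assumption $Z(K_v)\neq\emptyset$ makes the Springer gerb of $Z_{K_v}$ neutral, so $\eta^\torf$ dies in $H^2(K_v,H^\torf)$, whence $\tau$ dies in $H^2(K_v,T)$; by the description above this forces $\tau$ to have trivial image in $\br(L_w)^n$ for every $w\mid v$. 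As $v$ runs over $\Omega_Z$ the places $w$ run over all of $\cal Y^{(1)}$ --- which is exactly how $\Omega_Z$ was defined --- so $\tau$ lies in $\ker\bigl(\br(L)^n\to\bigoplus_{w\in\cal Y^{(1)}}\br(L_w)^n\bigr)$. This kernel is trivial, by the triviality of $\Sha^2_{\cal Y}(L,\gm)$ (\cite[Prop.~2.1]{ColliotHarari}, \cite[Thm.~1.6]{IzqDualDim2}; see also \cite[Prop.~2.1]{ColliotParimalaSuresh}). Hence $\tau=0$, the obstruction of the previous paragraph vanishes, and \cite{DLA} yields the desired $W\to Z$.

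The main obstacle is really the setup of this last computation: the whole reason for passing to the splitting field $L$, to its regular model $\cal Y$ and to the set $\Omega_Z$ is that, after applying Shapiro's lemma to the quasi-trivial torus $T$, having points over $K_v$ for all $v\in\Omega_Z$ --- that is, over only \emph{some} of the completions of $K$ --- becomes exactly the statement that $\tau$ is locally trivial at \emph{every} codimension-one point of $\cal Y$, which is precisely the input required for the triviality of $\Sha^2_{\cal Y}(L,\gm)$. A secondary, more routine point is to check that the homogeneous space furnished by \cite{DLA} genuinely carries the morphism $p$ realizing it as a $T$-torsor over $Z$, i.e.\ that the construction is functorial with respect to $G\times T\to G$.
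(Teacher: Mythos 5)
Your identification of the heart of the matter is exactly right, and the decisive computation is the same as the paper's: push $\eta^\torf$ forward to $H^2(K,T)\simeq H^2(L,\gm)^n$, note that the hypothesis on $\Omega_Z$ translates, after Shapiro, into local triviality at every $w\in\cal Y^{(1)}$, and then invoke the vanishing of $\Sha^2_{\cal Y}(L,\gm)$. This matches the paper step for step, and your observation that $\Omega_Z$ was engineered precisely so that ``local at $\Omega_Z$'' becomes ``local at all of $\cal Y^{(1)}$'' after Shapiro is the right way to read the definition.

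Where you diverge is in how you feed $\tau=0$ into \cite{DLA}, and here your route is both unnecessary and not clearly correct. You claim that producing $W$ amounts to neutrality of ``the image of $\cal M_Z$ in $H^2(K,G\times T)$'' and then invoke the Douai-type property of $K$ to reduce that neutrality to $(G\times T)^\tor$. But $\cal M_Z$ is a gerb bound by (the band of) $\bar H$, and it does not have a canonical image in $H^2(K,G\times T)$ without first producing a lien and a morphism, which is essentially what the theorem asks you to construct; the reduction is therefore either circular or at best underspecified. Moreover, your proposed embedding $\bar H\to G\times T$, $h\mapsto(h,\psi(h))$, would give a homogeneous space whose geometric stabilizer is isomorphic to $\bar H$, not $\bar H^\torf$ as the statement requires, which is a sign that this is not the embedding underlying the construction. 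The paper avoids all of this: once $\xi=0$, the vanishing is \emph{directly} interpreted as a morphism of gerbs $\cal M^\torf_Z\to\underline{\mathrm{TORS}}(T)$, and \cite[Thm.~3.4]{DLA} (applied with $N:=G^\ssu$, $G':=T$, $\cal M':=\cal M^\torf_Z$) produces $W$ together with the $T$-torsor structure over $Z$ and the injectivity of $\bar H^\torf\to(G\times T)^\tor$ in one stroke. Your closing remark that the $T$-torsor structure is a ``secondary, more routine point'' glosses over exactly the content that \cite[Thm.~3.4]{DLA} is designed to supply; there is no Douai-type argument in the paper's proof, and inserting one adds a gap rather than filling one.
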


\begin{proof}
We use the notation of Section \ref{sec preliminaires}. Consider the class $\eta^\torf\in H^2(K,H^\torf)$ naturally associated to $Z$ (see the text after Proposition \ref{prop conn stab}) and denote by $\xi$ its image in $H^2(K,T)\simeq H^2(L,\gm)^n$. For $v\in\Omega_Z$, the hypothesis $Z(K_v)\neq\emptyset$ implies that $\eta^\torf$ is trivial in $H^2(K_v,H^\torf)$. Hence $\xi$ lies in $\Sha^2_{\mathcal{Y}}(L,\gm)^n$, which is trivial by \cite[Rem.~2.3]{ColliotParimalaSuresh}. In particular, $\xi$ represents the trivial gerbe $\underline{\mathrm{TORS}}(T)$. Thus, the fact that $\xi$ is the image of $\eta^\torf$ can be interpreted as a morphism of gerbes $\cal M^\torf_Z\to\underline{\mathrm{TORS}}(T)$. By \cite[Thm.~3.4]{DLA}, we get all the data in the statement of the theorem (using the notations in \cite{DLA}, take $N:=G^\ssu$, $G':=T$ and $\cal M':=\cal M^\torf_Z$).
\end{proof}

\begin{rem}
This kind of construction was already used by Borovoi in \cite{Borovoi96} in order to reduce the study of the Brauer-Manin obstruction to the Hasse principle and weak approximation for homogeneous spaces over number fields to simpler cases in which this study had already been done.
\end{rem}

We now prove that the Brauer-Manin obstruction for this $T$-torsor $W_{Z,G,\overline{H}}\to Z$ is enough to explain the eventual failure of the local-global principle for $Z$.

\begin{thm}\label{thm new obst}
Let $K$ be a field of type (a) or (b). Let $Z$ be a homogeneous space under a connected linear group $G$ with geometric stabilizer $\bar H$ satisfying \eqref{eq hyp}. We keep the notations given at the beginning of this section. In particular, $T$ is a quasi-trivial torus split by $L$. Assume that $Z(K_v)\neq\emptyset$ for every $v\in\Omega_{Z}$, and consider the $T$-torsor $W_{Z,G,\overline{H}} \rightarrow Z$ constructed in Proposition \ref{prop geom const}. Assume moreover that $W_{Z,G,\overline{H}}(\bb A_K)^{\Brusse(W_{Z,G,\overline{H}})}\neq\emptyset$. Then $Z(K)\neq \emptyset$.
\end{thm}

In particular, by Proposition \ref{prop conn stab}, we obtain the result for homogeneous spaces with connected stabilizers stated in Theorem \ref{mt2}.

\begin{proof}
Proposition \ref{prop geom const} gives us the $T$-torsor $W_{Z,G,\overline{H}}\to Z$, to which we apply Theorem \ref{thm Htorf inj Gtor}. We conclude that
\[W_{Z,G,\overline{H}}(\bb A_K)^{\Brusse(W_{Z,G,\overline{H}})}\neq\emptyset\,\Rightarrow\, W_{Z,G,\overline{H}}(K)\neq\emptyset\, \Rightarrow\, Z(K)\neq \emptyset,\]
where the second implication is obvious.
\end{proof}

\begin{rem}
Another way to define this obstruction is as follows. Assuming $Z(K_v)\neq\emptyset$ for every $v\in\Omega_Z$, we obtain a $T$-torsor $W_{Z,G,\overline{H}}\to Z$. Since $T$ is quasi-trivial, every adelic point $(P_v)\in Z(\bb A_K)$ lifts to an adelic point $(Q_v)\in W_{Z,G,\overline{H}}(\bb A_K)$. Evaluation at $(Q_v)$ defines then a homomorphism $\phi_Z:\Brusse(W_{Z,G,\overline{H}})\to\Q/\Z$ which is independent of the choice of $(P_v)$ and $(Q_v)$. By definition, this means that, for any $(Q_v) \in W_{Z,G,\overline{H}}(\bb A_K)$ and any $\alpha \in \Brusse(W_{Z,G,\overline{H}})$, the Brauer-Manin pairing can be computed by the formula $\mathrm{BM}\left( (Q_v),\alpha \right)=\phi_Z (\alpha)$. Hence the condition $W_{Z,G,\overline{H}}(\bb A_K)^{\Brusse(W_{Z,G,\overline{H}})}\neq\emptyset$ is equivalent to $\phi_Z$ being the trivial morphism.
\end{rem}

In order to get a more conceptual result, we introduce the following obstruction to the local-global principle.

\begin{defi}\label{defi qtBrusse}
For an arbitrary $K$-variety $Z$, we define
\[Z(\bb A_K)^{\qt,\Brusse}:=\underset{T\mathrm{quasi-trivial}}{\bigcap_{f:W\xrightarrow{T} Z}}f(W(\bb A_K)^{\Brusse(W)}).\]
\end{defi}

It is easy to see that $Z(K)\subseteq Z(\bb A_K)^{\qt,\Brusse}\subseteq Z(\bb A_K)$.

\begin{cor}[of Theorem \ref{thm new obst}]\label{cor new obst}
Let $K$ be a field of type (a) or (b). Let $Z$ be a homogeneous space under a connected linear group $G$ with geometric stabilizer $\bar H$ satisfying \eqref{eq hyp}. Then
\[\big [Z(\bb A_K)^{\qt,\Brusse}\neq\emptyset\ \,\text{and}\ \, Z(K_v)\neq\emptyset,\,\forall\, v\in\Omega_{0,Z}\big ]\,\Rightarrow\, Z(K)\neq \emptyset.\]
\end{cor}

\begin{proof}
Since $Z(\bb A_K)^{\qt,\Brusse}\neq\emptyset$, the set $Z(K_v)$ is non-empty for each $v\in X^{(1)}=\Omega_{Z}\setminus \Omega_{0,Z}$.  We deduce that $Z(K_v)\neq \emptyset$ for each $v\in \Omega_Z$. In particular, we can construct the $T$-torsor $W_{Z,G,\overline{H}} \rightarrow Z$ of Proposition \ref{prop geom const}. The assumption $Z(\bb A_K)^{\qt,\Brusse}\neq\emptyset$ implies that $W_{Z,G,\overline{H}}(\bb A_K)^{\Brusse(W_{Z,G,\overline{H}})}\neq\emptyset$ and hence, by Theorem \ref{thm new obst}, we get $Z(K) \neq \emptyset$.
\end{proof}

\begin{rem}
The obstruction in Corollary \ref{cor new obst} is clearly not computable. As for the obstruction of Theorem \ref{thm new obst}:
\begin{enumerate}
    \item one can explicitly obtain a model $\cal Y$ of the field $L$ by taking the normalization of $\cal X$ in $L$ and desingularizing;
    \item the set $\Omega_{0,Z}$ of extra places is finite and hence the condition $Z(K_v) \neq \emptyset$ for $v\in \Omega_{0,Z}$ is explicitly computable;
    \item the morphism $H^\torf\to T$ is explicitly computable;
    \item the corresponding $T$-torsor $W_{Z,G,\overline{H}} \rightarrow Z$ is explicitly computable over any extension $M/K$ such that $Z(M)\neq\emptyset$ and can be obtained over $K$ by (finite) Galois descent;
    \item the homogeneous space $W_{Z,G,\overline{H}}$ satisfies the hypotheses of Proposition \ref{prop fact brusse barrre}, hence the group $\overline{\Brusse(W_{Z,G,\overline{H}})}$ is finite;
    \item given an element $\alpha \in \br(W_{Z,G,\overline{H}})$ that lifts an element of $\overline{\Brusse(W_{Z,G,\overline{H}})}$, the value of the Brauer-Manin pairing $\mathrm{BM}((P_v),\alpha)$ does not depend on the adelic point $(P_v)$ and can be explicitly computed.
\end{enumerate}
The only constructions for which we do not know whether they are algorithmically computable are the Galois descent datum for the $T$-torsor $W_{Z,G,\overline{H}}$, the computation of the group $\overline{\Brusse(W_{Z,G,\overline{H}})}$ and the lifts of its elements to $\br(W_{Z,G,\overline{H}})$.
\end{rem}

\section{Comparing our new obstruction with descent obstructions}\label{descent sec}

It is well-known that, in the context of number fields, the Brauer-Manin obstruction with respect to the whole Brauer group is equivalent to the descent obstruction with respect to connected groups (cf.~\cite[Prop.~5.3.4]{Skor} and \cite[Thm.~2]{Har02}). Furthermore, for proper varieties, the Brauer-Manin obstruction with respect to the \emph{algebraic} Brauer group is well-known to be equivalent to the descent obstruction with respect to tori (cf.~\cite[Thm.~6.1.1]{Skor} and \cite[Thm.~2]{Har02}). Now, we have just seen that in this new context the Brauer-Manin obstruction fails to explain some failures of the local-global principle, but that the extra input of torsors under quasi-trivial tori seems to be enough to explain everything. It is natural then to compare this new obstruction with the descent obstruction with respect to tori.

In this section we prove that, up to considering the new set of places $\Omega_Z$, which seems to be unavoidable, the descent obstruction with respect to tori is not weaker than the obstruction introduced in Definition \ref{defi qtBrusse} and hence it also explains the failures of the local-global principle for homogeneous spaces satisfying \eqref{eq hyp}.\\

Let us recall the classical definitions associated to descent obstructions (cf.~\cite[\S5.3]{Skor}).

\begin{defi} Given a torsor $f:W\to Z$ under a $K$-group $G$, we define
\[Z(\bb A_K)^f:=\bigcup_{[a]\in H^1(K,G)} ({}_af)({}_aW(\bb A_K)).\]
For an arbitrary $K$-variety $Z$, we define
\[Z(\bb A_K)^\tor:=\underset{T\,\mathrm{torus}}{\bigcap_{f:W\xrightarrow{T} Z}} Z(\bb A_K)^f.\]
\end{defi}

We give now a generalization of Definition \ref{defi qtBrusse}, which we will compare with descent obstructions here below.

\begin{defi}
For $B\in\{\Brusse, \brun, \br\}$, define
\[Z(\bb A_K)^{\qt,B}:=\underset{T\,\mathrm{quasi-trivial}}{\bigcap_{f:W\xrightarrow{T} Z}}\bigcap_{\alpha\in B(W)} f(W(\bb A_K)^\alpha).\]
\end{defi}

\begin{rem}
Recall that quasi-trivial tori have trivial $H^1$ by Hilbert's Theorem 90 and hence there is no need to consider Galois twists in the last definition. In particular, for $B=\Brusse$ we recover Definition \ref{defi qtBrusse} since $W(\bb A_K)^\alpha$ is either empty or the whole set $W(\bb A_K)$ and thus
\[\bigcap_{\alpha\in \Brusse(W)} f(W(\bb A_K)^\alpha)=f(W(\bb A_K)^{\Brusse(W)}).\]
Note, however, that for $B=\brun$ or $B=\br$ this definition \emph{does not} coincide \textit{a priori} with the ``more natural'' set
\[\underset{T\mathrm{quasi-trivial}}{\bigcap_{f:W\xrightarrow{T} Z}}f(W(\bb A_K)^{B(W)}).\]
\end{rem}

We can state now the main result of this section.

\begin{thm}\label{thm tor vs qtBr}
Let $K$ be a field of type (a) or (b). Let $Z$ be a smooth geometrically integral $K$-variety. We have $Z(\bb A_K)^\tor\subseteq Z(\bb A_K)^{\qt,\brun}$.
\end{thm}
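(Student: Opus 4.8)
The plan is to show that any adelic point surviving all torus-descent obstructions also survives the quasi-trivial-torus Brauer-Manin obstruction attached to $\brun$. So let $(P_v)\in Z(\bb A_K)^\tor$, let $f:W\xrightarrow{T} Z$ be a torsor under a quasi-trivial torus $T$, and let $\alpha\in\brun(W)$; I must produce a point of $W(\bb A_K)^\alpha$ whose image under $f$ is $(P_v)$. Since $T$ is quasi-trivial, $H^1(K,T)=0$, so the descent set $Z(\bb A_K)^f$ is just $f(W(\bb A_K))$ with no twists; as $(P_v)\in Z(\bb A_K)^\tor\subseteq Z(\bb A_K)^f$, it lifts to \emph{some} adelic point of $W$. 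The issue is to adjust this lift so that it is orthogonal to the chosen $\alpha$, and the tool for that is the action of $H^1(K,T)=0$ is useless here — instead one uses the fact that the fibers of $f$ are torsors under $T$ over each $K_v$, so the local lifts can be modified by local points of $T$, i.e.\ by elements of $T(\bb A_K)$, and one controls the resulting change in the Brauer evaluation via a pairing with $H^1(K,\hat T)$.

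Concretely, the key step is the standard ``lifting/compatibility'' computation for torsors under tori: for a $T$-torsor $f:W\to Z$ with $T$ quasi-trivial, and for $\alpha\in\brun(W)$, the partial evaluation map $W(\bb A_K)\to\Q/\Z$, $(Q_v)\mapsto\sum_v \alpha(Q_v)$ restricted to a single fiber $f^{-1}((P_v))\subseteq W(\bb A_K)$ (a torsor under $T(\bb A_K)$) differs from a constant by the composite $T(\bb A_K)\to H^1(K,\hat T)^\vee$ coming from cup-product with the image of $\alpha$ in $H^1(Z,\hat T)$ (or rather its pullback data), via the local pairings $T(K_v)\times H^1(K_v,\hat T)\to\Q/\Z$ and the reciprocity sequence for $K$. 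The point is then: because $(P_v)\in Z(\bb A_K)^\tor$, for \emph{every} class in $H^1(K,\hat T)$ — equivalently every torsor under the dual obtained by twisting — the relevant local pairing sum vanishes, which is exactly what is needed to choose a modification $(t_v)\in T(\bb A_K)$ making the $\alpha$-evaluation of the modified lift equal to zero. This is where I would invoke the reciprocity exact sequence $\br(K)\to\bigoplus_{v\in X^{(1)}}\br(K_v)\to\Q/\Z\to 0$ from the preliminaries together with Poitou-Tate-type duality for tori over $K$ (as used in \cite{ColliotHarari} and \cite{IzqDualDim2}); in fact since $\alpha$ is algebraic, one only needs the duality between $\Sha$-type groups of $T$ and $\hat T$, not the full Brauer group.

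The main obstacle, and the step I would spend the most care on, is the bookkeeping that identifies the ``defect'' of the lift as a \emph{descent} pairing — i.e.\ rewriting $\sum_v \alpha({}^{t_v}Q_v)-\sum_v\alpha(Q_v)$ as $\langle (t_v), \partial\alpha\rangle$ for an appropriate class $\partial\alpha\in H^1(K,\hat T)$ (using that $\brun(W)/f^*\brun(Z)$ is governed by $H^1(K,\hat T)$ via the Hochschild-Serre spectral sequence for $W\to Z$, and that $f^*$-classes evaluate constantly on the fiber), and then checking that $(P_v)\in Z(\bb A_K)^\tor$ forces the corresponding obstruction to vanish so that a compensating $(t_v)$ exists. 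One has to be a little careful because $Z(\bb A_K)^{\qt,\brun}$ as defined requires, for the \emph{same} lift, orthogonality to \emph{all} of $\brun(W)$ simultaneously, but since $\brun(W)/f^*\brun(K)$ is finite (indeed controlled by $H^1(K,\hat T)$, finite by the duality results quoted after Proposition \ref{prop fact brusse barrre}), one can handle finitely many classes at once, or equivalently pass to a single universal class. Once this dictionary is in place, the inclusion $Z(\bb A_K)^\tor\subseteq Z(\bb A_K)^{\qt,\brun}$ drops out; I would also note, as the paper does elsewhere, that the whole argument only ever sees finitely many places, so no convergence subtleties arise.
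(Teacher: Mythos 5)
Your approach diverges substantially from the paper's and, as sketched, has a gap that I do not think can be filled by ``bookkeeping.''

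First, a minor misreading: the set $Z(\bb A_K)^{\qt,\brun}$ is an intersection of the sets $f(W(\bb A_K)^\alpha)$ over all $\alpha$, so for each $\alpha$ you may produce a \emph{different} lift; simultaneous orthogonality to all of $\brun(W)$ is not required. (The Remark after the definition makes this explicit --- it is precisely the reason the authors warn that $Z(\bb A_K)^{\qt,\brun}$ need not coincide with the ``more natural'' set.)

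The real problem is with your key move. You propose to lift $(P_v)$ arbitrarily to $W(\bb A_K)$ and then adjust the lift by an ad\`elic point $(t_v)\in T(\bb A_K)$, controlling the change of the $\alpha$-evaluation by a class $\partial\alpha\in H^1(K,\hat T)$ and a reciprocity/Poitou--Tate pairing. But $T$ is \emph{quasi-trivial}, so $\hat T$ is a permutation $\gal(\bar K/K)$-module and Shapiro's Lemma gives $H^1(K,\hat T)=0$; the class $\partial\alpha$ you want to pair against is automatically zero, so the adjustment mechanism yields no leverage. Symmetrically, $H^1(K,T)=0$, so the $T$-torsor $W\to Z$ itself carries no descent information: there are no twists and, once $(P_v)$ has a point over each $K_v$, the set $Z(\bb A_K)^f$ for this particular $f$ is all of $Z(\bb A_K)$. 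In other words, the hypothesis $(P_v)\in Z(\bb A_K)^\tor$ is only useful via torsors over $Z$ under \emph{other} tori, and your argument never produces one.

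The paper's proof is built around producing exactly that auxiliary torsor. Using the period $=$ index property of these fields (so that $W(\bb A_K)^\alpha=W(\bb A_K)^h$ for a $\pgln$-torsor $h:V\to W$ representing $\alpha$), one lifts $h$ to a $G$-torsor $U\to W$ for $G$ the pushout of $\gln$ along $\gm\hookrightarrow R_{L/K}\gm$, takes the quotient by $\sln$ to get a torus torsor $Y\to W$, and then invokes Theorem \ref{thm empilement} from the appendix to collapse the tower $Y\to W\to Z$ into a single torsor $Y\to Z$ under a torus $Q$. It is the descent condition for \emph{this} $Q$-torsor that the hypothesis $(P_v)\in Z(\bb A_K)^\tor$ is applied to; quasi-triviality of $T$ and the triviality of $H^2(K,\sln)$ (Lemma \ref{lem GA}) are then used to trace the resulting twist back up the tower to a lift in $W(\bb A_K)^\alpha$. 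Without some analogue of this construction --- in particular, without a way to convert $\alpha$-orthogonality into a descent condition for a torus torsor over $Z$ --- the inclusion cannot be extracted from the pairing formalism alone.
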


We then deduce that the descent obstruction with respect to tori, and the existence of local points at the places in $\Omega_{0,Z}$, are enough to explain the lack of Hasse principle for the homogeneous spaces considered in this article:

\begin{cor}\label{descent hom}
Let $K$ be a field of type (a) or (b). Let $Z$ be a homogeneous space under a connected linear group $G$ with geometric stabilizer $\bar H$ satisfying \eqref{eq hyp}. Then
\[\big [Z(\bb A_K)^{\tor}\neq\emptyset\ \,\text{and}\ \, Z(K_v)\neq\emptyset,\,\forall\, v\in\Omega_{0,Z}\big]\,\Rightarrow\, Z(K)\neq \emptyset.\]
\end{cor}

\begin{proof}
If $Z(\bb A_K)^{\tor}\neq\emptyset$, then by Theorem \ref{thm tor vs qtBr} the sets $Z(\bb A_K)^{\qt,\brun}\subset Z(\bb A_K)^{\qt,\Brusse}$ are non-empty, and we conclude by Corollary \ref{cor new obst}.
\end{proof}

\begin{proof}[Proof of Theorem \ref{thm tor vs qtBr}]
Consider an adelic point $(P_v)\in Z(\bb A_K)^\tor$. We must prove that, for every quasi-trivial torus $T$, for every $T$-torsor $W\to Z$ and every class $\alpha\in\brun(W)$, we can find a lift of $(P_v)$ to $W(\bb A_K)$ that is orthogonal to $\alpha$. Since the Brauer group coincides with the Azumaya Brauer group, we may follow the proof of \cite[Prop.~5.3.4]{Skor}, adapting it to our context. Note that this proof uses the bijectivity of the map
\[H^1(L,\pgln)\xrightarrow{d_n} \br(L)[n],\]
for all the involved local and global fields, and this holds for a given field $L$ if and only if it has the ``period = index'' property. Since this is well-known for the fields $L$ considered here (cf.~Proposition \ref{prop coh dim 2 and period-index}) we conclude that, whenever we are given a class $\alpha\in\brun(W)$ and a $\pgln$-torsor $f:V\to W$ representing $\alpha$, we have $W(\bb A_K)^\alpha=W(\bb A_K)^f$. Note that the torsor is trivialized over $\bar K$ since $\alpha\in\brun(W)$.

Fix then such a tower $V\to W\to Z$. Let $L/K$ be an extension such that $\alpha|_L=0\in\br(W_L)$. Set $S:=R_{L/K}\gm$ and consider the natural inclusion $i:\gm \rightarrow S$. The injective morphism:
\begin{align*}
    \mathbb{G}_{\mathrm{m}} & \mapsto S \times \gln \\
    t & \mapsto (i(t),t^{-1}\mathrm{Id}_n)
\end{align*}
allows one to regard $\gm$ as a central subgroup of $S \times \gln$. One then easily checks that the quotient $G:=(S \times \gln)/\gm$
fits into the following commutative diagram with exact rows:
\[\xymatrix{
1 \ar[r] & \gm \ar[r] \ar[d]^{i} & \gln \ar[d] \ar[r] & \pgln \ar@{=}[d] \ar[r] & 1\\
1 \ar[r] & S \ar[r] & G \ar[r] & \pgln \ar[r] & 1.
}\]
This diagram gives rise to the following diagram with exact rows in non-abelian cohomology:
\[\xymatrix{
H^1(W,\gln) \ar[r] \ar[d] & H^1(W,\pgln) \ar[r]^{\Delta} \ar@{=}[d] & H^2(W,\gm) \ar[d] \\
H^1(W,G) \ar[r] & H^1(W,\pgln) \ar[r]^{\Delta} & H^2(W,S). \\
}\]
Note that the top $\Delta$-arrow sends the class of $[V\to W]$ to $\alpha$ and that the right-hand vertical arrow corresponds to the restriction map $\br(W)\to \br(W_L)$, hence the image of $\alpha$ in $H^2(W,S)$ is trivial. This implies that the class $[V\to W]$ comes from $H^1(W,G)$. In other words, there exists a $G$-torsor $U\to W$ dominating $V\to W$.

Note now that $\sln$ is normal in $G$ since $S$ is central and they both generate $G$. In particular, $G/\sln$ is a torus $R$. Having said this, we get the following diagram of torsors:
\begin{equation}\label{diag torsor tower}
\xymatrix@=1em{
& U \ar[dl]_S \ar[dd]_G \ar[dr]^{\sln} \\
V \ar[dr]_\pgln && Y \ar[dl]^{R} \\
& W \ar[d]_T \\
& Z.
}
\end{equation}
By Theorem \ref{thm empilement} in the appendix, we know that there exists a $K$-torus $Q$ fitting into an extension
\[1\to R \to Q \to T\to 1,\]
such that the arrow $Y\to Z$ has the structure of a $Q$-torsor. Since $(P_v)\in Z(\bb A_k)^\tor$, we know that there exists a Galois twist ${}_aY$ of $Y$ for $[a]\in H^1(K,Q)$ such that $(P_v)$ lifts to an adelic point in ${}_aY$. Now, since $T$ is quasi-trivial, we immediately see that $[a]$ comes from a class in $H^1(K,R)$, abusively still denoted by $[a]$. Now, consider the exact sequence 
\[1\to \sln\to G\to R\to 1,\]
and the corresponding exact sequence in non-abelian cohomology (cf.~\cite[Rem.~IV.4.2.10]{Giraud}):
\[ H^1(K,G) \to H^1(K,R) \xrightarrow{\Delta} H^2(K,\sln).\]
Since $H^2(K,\sln)$ is only composed of neutral classes by Lemma \ref{lem GA}, the class $[a]\in H^1(K,R)$ comes from a class $[b]\in H^1(K,G)$, whose image in $H^1(K,\pgln)$ we denote by $[c]$. We may then twist the whole diagram \eqref{diag torsor tower} by the cocycle $b$ in order to get the following diagram of torsors:
\[\xymatrix@=1em{
& {}_bU \ar[dl]_S \ar[dd]_{{}_bG} \ar[dr]^{\sl(A)} \\
{}_cV \ar[dr]_{\mathrm{PGL}(A)} && {}_aY \ar[dl]^{R} \\
& W \ar[d]_T \\
& Z,
}\]
where $A$ denotes some central simple algebra over $K$ (here $\sl(A)$ and $\mathrm{PGL}(A)$ are inner twists of $\sln$ and $\pgln$). We know then that $(P_v)$ lifts to an adelic point $(P'_v)\in {}_aY(\bb A_K)$. Each $P'_v$ lifts in turn to a $K_v$-point of ${}_bU$ since $H^1(K_v,\sl(A))=1$ for these fields (cf. \cite{Suslin85}). It is easy to see that these lifts define an adelic point in ${}_bU$, which we may push down to an adelic point $P''$ in ${}_cV$ lifting $(P_v)$. Since ${}_cV$ is a Galois twist of $V$, the image of $P''$ in $W$ belongs to $W(\bb A_K)^f$ and lifts $(P_v)$. This concludes the proof.
\end{proof}

\begin{appendix}

\section{A result on towers of torsors}\label{sec appendix}

The goal of this appendix is to prove the following result, which is needed in the proof of Theorem \ref{thm tor vs qtBr} above. We are greatly indebted to Mathieu Florence for his help with the proof.

\begin{thm}\label{thm empilement}
Let $K$ be a field of characteristic 0. Let $G$ be a connected linear $K$-group and $T$ an algebraic $K$-torus. Let $Y\to X$ be a $G$-torsor and let $Z\to Y$ be a $T$-torsor. Assume that $\pic(\bar G)=0$ (which holds, for instance, if $G$ is a torus) and that $X$ is geometrically integral. Then there exists a canonical extension
\[1\to T\to E\to G\to 1,\]
such that the composite $Z\to X$ is an $E$-torsor. Moreover, if $G$ is a torus, then $E$ is a torus as well.
\end{thm}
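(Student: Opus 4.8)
The plan is to construct $E$ as an automorphism group. Consider the composite torsor $\pi : Z \to X$ and look at the sheaf (for the fppf topology on $X$) of "automorphisms of $Z$ over $X$ compatible with the $T$-action", i.e. the group functor $E := \underline{\mathrm{Aut}}_X^T(Z)$ whose $S$-points, for $S \to X$, are the $T$-equivariant $S$-automorphisms of $Z_S$. There is an obvious action of $E$ on $Z$, and the first task is to show that $Z \to X$ is an $E$-torsor and that $E$ sits in an exact sequence $1 \to T \to E \to G \to 1$. The inclusion $T \hookrightarrow E$ comes from the $T$-action on $Z$ (translations are $T$-equivariant automorphisms over $Y$, a fortiori over $X$); the quotient map $E \to G$ comes from the fact that any $T$-equivariant automorphism of $Z$ over $X$ descends to an automorphism of $Y = Z/T$ over $X$, hence to an element of $\underline{\mathrm{Aut}}_X(Y) = G$ (using that $Y \to X$ is a $G$-torsor). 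First I would check exactness: surjectivity of $E \to G$ is the heart of the matter and amounts to lifting, fppf-locally on $X$, an automorphism of $Y$ to a $T$-equivariant automorphism of $Z$; the obstruction to such a lift, after base-changing to a cover that trivializes both torsors, is a class measuring the difference of two $T$-torsors on $\bar G$ (or on $G_S$), and this is where $\pic(\bar G) = 0$ enters.

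More precisely, the key step is the following. After pulling back along an fppf cover $X' \to X$ trivializing $Y \to X$, we get $Y_{X'} \cong G \times X'$ and $Z_{X'}$ becomes a $T$-torsor on $G \times X'$; to lift a given automorphism $g \in G$ (acting by translation on the $G$-factor) to $Z$, one needs an isomorphism of $T$-torsors between $Z_{X'}$ and its pullback under translation by $g$. Both are $T$-torsors on $G \times X'$; their difference is classified by $H^1_{\mathrm{fppf}}(G \times X', T)$, and the relevant piece is controlled by $H^1(\bar G, T) $, which injects into something governed by $\pic(\bar G) \otimes \hat T$-type data together with $\br(\bar G)$; the hypothesis $\pic(\bar G)=0$ (and that over the algebraic closure $G$-torsors under $T$ on $G$ are "pulled back from the base", using that $\bar G$ is connected with trivial Picard group, so $\mathrm{Pic}(\bar G \times_{\bar K} \bar S) = \mathrm{Pic}(\bar S)$) forces the two torsors to agree up to the base, making the lift exist étale-locally. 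So the plan is: (1) show the difference torsor is, locally on $X$, pulled back from the base; (2) conclude $E \to G$ is surjective as fppf sheaves; (3) deduce $1 \to T \to E \to G \to 1$ is exact and that $E$ is representable by a smooth affine $K$-group scheme (it is an extension of the linear group $G$ by the torus $T$, hence linear); (4) verify $Z \to X$ is an $E$-torsor by checking it fppf-locally, where it becomes $E \times X' \to X'$. The canonicity of the extension is automatic from the functorial definition of $E$.

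Finally, for the last sentence: if $G$ is a torus, then $E$ is a connected commutative group (an extension of a torus by a torus), and to see it is itself a torus I would argue that it is smooth, connected, affine, and has no unipotent part — indeed $\mathrm{Ext}^1_K(G, T)$ for $G, T$ tori is torsion and an extension of a torus by a torus is of multiplicative type and smooth connected, hence a torus (alternatively, base-change to $\bar K$ where $\mathrm{Ext}^1(\mathbb{G}_m^r, \mathbb{G}_m^s) = 0$, so $\bar E \cong \bar T \times \bar G$ is a torus, and a $K$-group whose base change to $\bar K$ is a torus, being smooth connected affine, is a $K$-torus).

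**Main obstacle.** The hard part will be step (1)–(2): proving that $E \to G$ is an fppf-epimorphism, i.e. that the local lifting obstruction vanishes. This is precisely the point where one must use $\pic(\bar G) = 0$ to show that a $T$-torsor on $G_{\bar S}$ is, Zariski- or étale-locally on $S$, the pullback of a $T$-torsor on $\bar S$ — a "seesaw"-type statement for torsors under tori on a group with trivial geometric Picard group. Care is needed because $S$ is an arbitrary $X$-scheme (not a field), so one cannot just invoke vanishing of cohomology over a point; one needs the rigidity statement $\mathrm{Pic}(\bar G \times_{\bar K} \bar S) = \mathrm{pr}_2^*\mathrm{Pic}(\bar S)$ (valid since $\bar G$ is geometrically integral with $\mathrm{Pic}=0$ and has a rational point), together with the analogous control of the Brauer group, to push the $T$-torsor down. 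Everything else (representability of $E$, the torsor property, the torus case) is then formal.
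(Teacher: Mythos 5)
Your overall strategy (realize $E$ inside the sheaf $\underline{\aut}_X^T(Z)$ of $T$-equivariant automorphisms of $Z$ over $X$, map it to $G \subset \underline{\aut}_X(Y)$, and use $\pic(\bar G)=0$ to prove surjectivity) is the same one the paper uses. But there is a genuine gap at the kernel: you assert that the kernel of $\underline{\aut}_X^T(Z)\to G$ is exactly $T$, identifying $T$ with translations on the fibres of $Z\to Y$. That is not correct. The $Y$-automorphisms of $Z$ compatible with the $T$-action form the sheaf $W\mapsto T(Y_W)$, and by Rosenlicht's lemma (applied after trivializing $Y$ as $G\times W$) this is $T(W)\times M(W)$ where $M$ is a nonzero lattice coming from $\hom(\bar G,\bar T)$, so the kernel is strictly larger than $T$ whenever $G^{\tor}\neq 1$. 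In particular, taking $E:=\underline{\aut}_X^T(Z)$ (or even its preimage of $G$) does \emph{not} directly give an extension of $G$ by $T$: the kernel is an extension of a lattice $M$ by $T$. The paper handles this by first passing to the preimage $\cal E'$ of $G$, quotienting by $T$ to obtain an extension $1\to M\to \cal F\to G\to 1$, proving $\cal F$ is a scheme (using $H^1(G,\bb Z)=0$ so that the $M$-torsor $\cal F\to G$ is trivial after a finite extension), and finally taking the neutral connected component $E$ of $E'$ to kill the discrete $M$-part. Without this step your $E$ is the wrong group, and moreover you assert its representability by saying "it is an extension of $G$ by $T$, hence linear," which is circular since that extension structure is what needs to be produced. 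A secondary, less serious difference is in the surjectivity step: you propose an fppf-local seesaw argument on $G\times X'$, acknowledging it needs a rigidity statement for $\pic$ over arbitrary bases; the paper instead reduces to $T,G$ split over a finite extension and invokes Sansuc's result that $\pic(X)\to\pic(Y)$ is onto when $\pic(G)=0$, giving directly that $Z\to Y$ is pulled back from $X$, hence $Z\cong Y\times_X Z'$, from which surjectivity is immediate — this sidesteps the technical care your version requires over non-field bases.
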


\begin{rem}
One can find similar results in \cite[App.~A]{McFaddin et cie} and \cite[Lem.~2.13]{BD}, but none of these seems to be general enough for our purposes. We hope to generalize this result even further in the future.
\end{rem}

\begin{proof}
For a $K$-scheme $W/K$, we denote by $X_W,Y_W,Z_W,T_W,G_W$ the $W$-(group-)schemes obtained by base change from $X,Y,Z,T,G$ respectively. Consider the group $\aut_{X_W}^{T_W}(Z_W)$ of $X_W$-auto\-morphisms $\varphi$ of $Z_W$ that are compatible with the action of $T_W$ in the sense that the following diagram commutes:
\[\xymatrix{
Z_W\times T_W \ar[r]^-{a_W} \ar[d]_{\varphi\times\id} & Z_W \ar[d]^{\varphi} \\
Z_W\times T_W \ar[r]^-{a_W} & Z_W,
}\]
where $a$ denotes the morphism defining the action of $T$ on $Z$ and $a_W$ the corresponding morphism after base change. The functor $W/K\mapsto \aut_{X_W}^{T_W}(Z_W)$ defines a group presheaf over the big \'etale site over $K$. Denote by $\underline{\aut}_X^T(Z)$ the corresponding sheaf and consider the subsheaf $\underline{\aut}_{Y}^{T}(Z)$ defined by taking the subgroup $\aut_{Y_W}^{T_W}(Z_W)$ of $\aut_{X_W}^{T_W}(Z_W)$ for each $W/K$. We have
\[\aut_{Y_W}^{T_W}(Z_W)=T(Y_W).\]
Indeed, it is well-known that the functor $W/Y\mapsto \aut_{W}^{T_W}(Z\times_Y W)$ over the big \'etale site of $Y$ is represented, as a $Y$-scheme, by $T_Y$ (cf.~for instance \cite[III.\S1.5]{Giraud}). Moreover, a direct application of Rosenlicht's Lemma gives us, for geometrically irreducible $W$,
\[T(Y_W)=T(W)\times M(W),\]
where $M$ is a free and finitely generated abelian constant sheaf. We deduce then that the quotient sheaf $M$ associated to the presheaf
\[W\mapsto T(Y_W)/T(W),\]
is a locally free, locally constant sheaf that is finitely generated and abelian. In other words, we have an exact sequence of abelian sheaves
\[1\to T\to \underline{\aut}_{Y}^{T}(Z)\to M\to 1.\]
In particular, since $M$ is locally constant, it is representable. And since $T$ is affine, we get by \cite[III.4, Prop.~1.9]{DG} that $\underline{\aut}_{Y}^{T}(Z)$ is represented by an abelian $K$-group scheme $A$. We abusively denote by $A$ the sheaf $\underline{\aut}_{Y}^{T}(Z)$ as well.\\

Since every element in $\aut_{X_W}^{T_W}(Z_W)$ induces an $X_W$-automorphism of $Y_W$, we have an exact sequence of sheaves
\[1\to A\to \underline{\aut}_X^T(Z)\xrightarrow{\pi} \underline{\aut}_X(Y),\]
where $\underline{\aut}_X(Y)$ denotes the sheaf of $X$-automorphisms of $Y$.

We claim that $\pi$ is surjective. In order to prove this, we can replace $K$ by a finite extension. We may assume then that both $T$ and $G$ are split over $K$. Since $G$ is split and connected and $\pic(\bar G)=0$, we have that $\pic(G)=0$ (cf.~\cite[Lem.~6.9]{Sansuc81}). By \cite[Prop.~6.10]{Sansuc81}, we get then that the map $\pic(X)\to\pic(Y)$ is surjective. Since $T$ is split, we deduce then that the torsor $Z\to Y$ comes by pullback from a $T$-torsor $Z'\to X$. In other words, $Z=Y\times_X Z'$. The surjectivity is then evident.

Note now that $G$ is clearly a subgroup of $\underline{\aut}_X(Y)$. Define then $\cal E'\subset \underline{\aut}_X^T(Z)$ to be the group sheaf corresponding to the preimage of $G$ via $\pi$. We get an exact sequence
\[1\to A\to \cal E'\to G\to 1.\]
Since $A$ is abelian, the extension $\cal E'$ induces an action of the \emph{sheaf} $G$ on the \emph{sheaf} $A$. By Yoneda's Lemma, this action is actually an action of the $K$-group $G$ on the $K$-group $A$. Note that $T$ corresponds to the neutral connected component of $A$ and thus it is preserved by the $G$-action since $G$ is connected. In particular, we may quotient by $T$ in order to get an exact sequence
\[1\to M\to\cal F\to G\to 1.\]
Then, if one forgets its group structure, $\cal F$ corresponds to an $M$-torsor over the scheme $G$. By \cite[Exp.~8, Prop.~5.1]{SGA7}, we know that $H^1(G,\bb Z)=0$ and hence, since $M$ is locally free and finitely generated, $H^1(G,M)=0$ up to taking a finite extension of $K$. Since representability can be checked over a finite extension (once again by Yoneda's Lemma), this tells us that $\cal F$ is represented by a $K$-scheme $F$ (which is a $K$-form of $M\times G$). Thus we have an exact sequence of $K$-group-\emph{schemes}
\[1\to M\to F\to G\to 1,\]
from where we get a new exact sequence of group sheaves
\[1\to T\to \cal E'\to F\to 1.\]
Once again, since $T$ is an affine group-scheme, \cite[III.4, Prop.~1.9]{DG} tells us that $\cal E'$ is in fact a scheme $E'$ and hence a $K$-group. But now, since $M$ is discrete, it suffices to define $E$ as the identity component of $E'$, which clearly fits into an exact sequence
\[1\to T\to E\to G\to 1.\]
Since $E\subset E'$ is a subgroup of $\underline{\aut}_X^T(Z)$, it is immediate then to check that $E$ acts on $Z$ and that $Z\to X$ is an $E$-torsor.

Note finally that the whole construction is clearly canonical. Indeed, the extension $\cal E'$ is obtained by pullback from the canonical extension of sheaves of automorphism groups. Then we get $E'$ as \emph{the} $K$-group scheme representing $\cal E'$ and finally we take the identity component, which is a characteristic subgroup. Also, the last assertion when $G$ is a torus follows from \cite[IV.1, Prop.~4.5]{DG}.
\end{proof}

\end{appendix}

\end{document}